\newtheorem{theorem}{Theorem}[section]
\newtheorem{lemma}[theorem]{Lemma}
\newtheorem{proposition}[theorem]{Proposition}
\newtheorem{corollary}[theorem]{Corollary}
\theoremstyle{definition}
\newtheorem{definition}[theorem]{Definition}
\newtheorem{example}[theorem]{Example}
\newtheorem{question}[theorem]{Question}
\newtheorem{remark}[theorem]{Remark}
\newcommand{\End}{\text{End}}
\newcommand{\SuperVect}{\text{\rm SuperVect}}
\newcommand{\Hom}{\text{Hom}}
\newcommand{\Rep}{\text{Rep}}
\newcommand{\Spec}{\text{Spec}}
\newcommand{\Corep}{\text{Corep}}
\newcommand{\Vect}{\text{Vect}}
\newcommand{\ot}{\otimes}
\newcommand{\ben}{\begin{enumerate}}
\newcommand{\een}{\end{enumerate}}
\newcommand{\C}{{\mathcal C}}
\begin{document}

\title[Virtually indecomposable tensor categories]
{Virtually indecomposable tensor categories}

\author{Shlomo Gelaki}
\address{Department of Mathematics, Technion-Israel Institute of
Technology, Haifa 32000, Israel} \email{gelaki@math.technion.ac.il}

\date{\today}

\keywords{tensor category, Grothendieck ring, Hopf (super)algebra,
affine (super)group scheme, formal (super)group.}

\begin{abstract}
Let $k$ be any field. J-P. Serre proved that the spectrum of the
Grothendieck ring of the $k-$representation category of a group is
connected, and that the same holds in {\em characteristic zero} for
the representation category of a Lie algebra over $k$ \cite{se}. We
say that a tensor category $\C$ over $k$ is \emph{virtually
indecomposable} if its Grothendieck ring contains no nontrivial
central idempotents. We prove that the following tensor categories
are virtually indecomposable: Tensor categories with the Chevalley
property; representation categories of affine group schemes;
representation categories of formal groups; representation
categories of affine supergroup schemes (in characteristic $\ne 2$);
representation categories of formal supergroups (in characteristic
$\ne 2$); symmetric tensor categories of exponential growth in
characteristic zero. In particular, we obtain an alternative proof
to Serre's Theorem, deduce that the representation category of any
Lie algebra over $k$ is virtually indecomposable also in {\em
positive characteristic} (this answers a question of Serre
\cite{se}), and (using a theorem of Deligne \cite{d} in the super
case, and a theorem of Deligne-Milne \cite{dm} in the even case)
deduce that any (super)Tannakian category is virtually
indecomposable (this answers another question of Serre \cite{se}).
\end{abstract}

\maketitle

\section{introduction}

The following theorem is due to J-P. Serre.

\begin{theorem} \cite[Corollary 5.5 \& Section 5.1.2; Ex. 3]{se}\label{serre}
Let $k$ be a field.

1) Let $G$ be any group, let $\mathcal{C}:=\Rep(G)$ be the category
of finite-dimensional representations of $G$ over $k$, and let
$Gr(G)$ be its (commutative) Grothendieck ring. Then the spectrum
$\Spec(Gr(G))$ of $Gr(G)$ is connected.

2) Assume that $k$ has {\em characteristic zero}. Let $\mathfrak{g}$
be a Lie algebra over $k$, let $\C:=\Rep(\mathfrak{g})$ be the
category of finite-dimensional representations of $\mathfrak{g}$
over $k$, and let $Gr(\mathfrak{g})$ be its (commutative)
Grothendieck ring. Then $\Spec(Gr(\mathfrak{g}))$ is connected.
\end{theorem}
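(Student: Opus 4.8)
My plan is to reduce the topological assertion to a statement about idempotents and then to exploit the positivity inherent in a Grothendieck ring. Because $\C=\Rep(G)$ (respectively $\Rep(\mathfrak{g})$) is symmetric, the ring $Gr$ is commutative, and $\Spec(Gr)$ is connected if and only if $Gr$ has no idempotent other than $0$ and $1$; this is exactly the assertion that $\C$ is virtually indecomposable, and I would prove the two parts by one uniform argument. The only features I use are that $\C$ is a rigid symmetric tensor category over $k$ whose objects have finite length, with $\be$ simple and $\End(\be)=k$ (for a Lie algebra the hypothesis $\chara k=0$ guarantees this setting). The structural input is that $Gr$ is a based ring: it is free over $\mathbb{Z}$ on the classes $b_i$ of the simple objects, with $b_0=[\be]$, nonnegative structure constants $N_{ij}^k=[b_i\otimes b_j:b_k]$, and a duality involution $i\mapsto i^{*}$ coming from rigidity. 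The $k$-dimension gives a ring homomorphism $\dim\colon Gr\to\mathbb{Z}$ with $\dim(b_i)\ge 1$; since $(\dim b_j)_j$ is a strictly positive left eigenvector of each nonnegative matrix $L_{b_i}$ of left multiplication, $\dim(b_i)$ is its Perron eigenvalue, so $\dim$ coincides with $\FPdim$.

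Next I would use $\dim$ to organize the search for idempotents. If $e$ is idempotent then $\dim(e)^2=\dim(e)$, so $\dim(e)\in\{0,1\}$; as $\dim(e)+\dim(1-e)=1$ and $1-e$ is again idempotent, it suffices to show that a \emph{nonzero} idempotent cannot have $\dim(e)=0$. Granting this, one of $e,1-e$ must vanish, forcing $e\in\{0,1\}$. To attack the remaining case I would symmetrize: the element $g:=e e^{*}$ is again idempotent and is now self-dual ($g^{*}=g$), with $\dim(g)=\dim(e)^{2}$. Writing $g=\sum_i\gamma_i b_i$ with $\gamma_{i^{*}}=\gamma_i$, the identities $g=g^2$ and $\tau(g)=\gamma_0$ (where $\tau$ extracts the coefficient of $\be$) translate into the integral quadratic relation $\gamma^{T} m\,\gamma=\gamma_0$, in which $m$ is the nonnegative symmetric integer matrix $m_{ij}=[b_i\otimes b_j^{*}:\be]$.

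When $\C$ is semisimple the matrix $m$ is the identity (up to the scalars $\dim_k\End(b_i)$), the relation becomes $\sum_i\gamma_i^{2}=\gamma_0$, and integrality instantly forces $g=\be$ or $g=0$; combined with $\dim(g)=\dim(e)^2$ this excludes $\dim(e)=0$ for $e\neq0$ and completes the proof. The main obstacle is the general, non-semisimple situation -- already present for $\Rep(G)$ in positive characteristic and for non-reductive Lie algebras -- where $b_i\otimes b_i^{*}$ contains $\be$ with multiplicity larger than one and $m$ is a genuinely nontrivial nonnegative integral form. Here the conclusion depends essentially on the integral and positive structure and is invisible after tensoring with $\mathbb{Q}$: for $G=S_3$ and $\chara k=2$ one finds $Gr\cong\mathbb{Z}[V]/(V^{2}-V-2)$, which becomes $\mathbb{Q}\times\mathbb{Q}$ over $\mathbb{Q}$ yet has connected spectrum over $\mathbb{Z}$. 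The plan for this step is to run a Frobenius--Perron argument over $\mathbb{Z}$: bound the spectral radius of $L_e$ (which equals $1$, since $L_e$ is a nonzero idempotent operator) by comparison with the nonnegative matrix $L_{|e|}$ attached to $|e|=\sum_i|a_i|b_i$, and use that $\dim=\FPdim$ is the Perron eigenvalue, so that positivity pushes $\dim(e)$ strictly away from $0$. Making this comparison sharp enough to exclude $\dim(e)=0$, and first reducing the possibly infinite index set to the finitely generated based subring generated by the finitely many simples in the support of $e$, is the technical heart of the argument; once it is in place, connectedness of $\Spec(Gr)$ follows uniformly for all groups and, in characteristic zero, for all Lie algebras.
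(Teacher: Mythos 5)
Your characteristic-zero argument is sound and essentially matches the paper's: with the Chevalley property (which holds for $\Rep(G)$ and $\Rep(\mathfrak{g})$ in characteristic zero, by Chevalley's theorem), $Gr(\C)=Gr(\C_{ss})$ is a unital based ring, and your computation with $g=ee^*$ and the coefficient of $\be$ is a variant of the paper's positivity proof of Theorem \ref{based}. But part 1) in \emph{positive characteristic} --- the genuinely hard case --- is exactly the step you leave as a ``plan,'' and that plan cannot work. First, a point of principle: in the non-semisimple case $Gr(\C)$ is \emph{not} a based ring, because the adjunction $(xy,z)=(x,zy^*)$ fails; in your own example $G=S_3$, $\chara k=2$, one has $V^2=V+2\cdot\be$, so $(V\cdot V,\be)=2$ while $(V,\be\cdot V^*)=(V,V)=1$. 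So the only structure you retain is: a free $\mathbb{Z}$-basis of simples, nonnegative structure constants, a duality involution, and the homomorphism $\dim$. That structure is provably insufficient to rule out idempotents. The commutative ring $\mathbb{Z}[x]/(x^2-x)$ with basis $\{1,x\}$, $x^*=x$, $\dim(x)=1$ has all of these features, yet $e=1-x$ is a nonzero idempotent with $\dim(e)=0$ --- precisely the case you must exclude. Consistently with this, the paper records (Question \ref{q}) that virtual indecomposability of \emph{general} tensor categories is open; any proof must use more than positivity and dimensions.

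Your specific Frobenius--Perron mechanism also fails concretely: the entrywise bound $|L_e|\le L_{|e|}$ gives $1=\rho(L_e)\le\rho(L_{|e|})=\sum_i|a_i|\dim(b_i)$, which involves the \emph{unsigned} sum and therefore says nothing about the signed quantity $\dim(e)=\sum_i a_i\dim(b_i)$; in the example above $\rho(L_e)=1$, $\rho(L_{|e|})=2$, no contradiction. What is actually needed is categorical input beyond the Grothendieck ring. Serre's original proof uses Brauer characters; the paper instead embeds $k\otimes_{\mathbb{Z}}Gr(\C)$ into the Hopf algebra $H=\mathcal{O}(G)$ via the character map (Proposition \ref{injective}), uses the $p$-divisibility trick to see that a nontrivial idempotent survives reduction mod $p$, kills it via a Hopf ideal $I$ with $\bigcap_{n\ge 1}I^n=0$ (Theorem \ref{mainhopf}), and reduces the value of $ch(e)$ at an arbitrary $g\in G$ to the abelian case by passing to the Zariski closure of $\langle g\rangle$ and using $G\cong G_s\times G_u$ and the structure of reductive abelian groups (Section 5). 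None of this is recoverable from the based-ring data you allow yourself, so the gap in your proposal is not a technicality but a missing idea.
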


The proof of Theorem \ref{serre} uses, among other things, the fact
that the semisimple representations of a group $G$ are detected by
their characters, in characteristic zero, and by their Brauer
characters, in positive characteristic.

Recall that the category $\Rep(G)$ is an example of a Tannakian
category \cite{dm} (see Section 2). Motivated by Theorem \ref{serre}
and this fact, Serre asked the following question.

\begin{question} \cite[Section 5.1.2; Ex. 4]{se}\label{serreq}
Let $\C$ be a Tannakian category over any field $k$. Is it true that
$\Spec(Gr(\mathcal{C}))$ is connected? In particular, let
$\mathfrak{g}$ be a Lie algebra over any field $k$ and let
$\C:=\Rep(\mathfrak{g})$ be the category of finite-dimensional
representations of $\mathfrak{g}$ over $k$. Is it true that
$\Spec(Gr(\mathfrak{g}))$ is connected?
\end{question}

Question \ref{serreq} can be extended to any {\em tensor category}
over $k$, namely to a $k-$linear locally finite abelian category
with finite-dimensional $\Hom-$spaces, equipped with an associative
tensor product and unit. (See e.g. \cite{e}, for the definition of a
tensor category and its general theory.)

\begin{definition}
Let $k$ be any field, and let $\C$ be any tensor category over $k$.
Let $R$ be any commutative ring. We say that $\C$ is {\em virtually
indecomposable over $R$} if its Grothendieck ring
$R\otimes_\mathbb{Z} Gr(\mathcal{C})$ with $R-$coefficients has no
nontrivial central idempotents, and that $\C$ is {\em strongly
virtually indecomposable over $R$} if $R\otimes_\mathbb{Z}
Gr(\mathcal{C})$ has no nontrivial idempotents. In the case
$R=\mathbb{Z}$ we shall suppress the phrase ``over $\mathbb{Z}$".
\end{definition}

\begin{question}\label{q}
Is it true that any tensor category over any field is virtually
indecomposable? Strongly virtually indecomposable?
\end{question}

Our goal in this paper is to provide a positive answer to Question
\ref{q} for a variety of tensor categories over any field $k$. More
precisely, we prove that the following tensor categories are
virtually indecomposable:
\begin{itemize}
\item
Tensor categories with the Chevalley property.
\item
Representation categories of affine group schemes.
\item
Representation categories of formal groups.
\item
Representation categories of affine supergroup schemes (in
characteristic $\ne 2$).
\item
Representation categories of formal supergroups (in characteristic
$\ne 2$).
\item
Symmetric tensor categories of exponential growth in characteristic
zero.
\end{itemize}
In particular, we obtain both an alternative proof to Theorem
\ref{serre} and a positive answer to Question \ref{serreq}.

\par

{\bf Acknowledgments.} The author is grateful to J-P. Serre for
sending him his proof of Theorem \ref{serre}, for suggesting
Question \ref{serreq}, and for helpful comments.

The author is indebted to P. Etingof for his help with the proofs,
and for his interest in the paper and encouragement.

The author thanks V. Ostrik for telling him about the paper
\cite{r}.

The research was partially supported by The Israel Science
Foundation (grant No. 317/09).

\section{The main results}

The following standard lemma shows that without loss of generality
we may (and shall) work over an algebraically closed field.

\begin{lemma} If $\C$ is a locally finite abelian category over a field $k$
then the map $Gr(\C)\to Gr(\C\otimes _k \overline{k})$ is injective.
\end{lemma}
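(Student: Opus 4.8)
The plan is to factor the scalar-extension map through finite subextensions and exploit that $Gr(\C)$ is torsion-free. Recall first that, since $\C$ is locally finite, every object has finite length, so by the Jordan--H\"older theorem $Gr(\C)$ is the free abelian group on the isomorphism classes of simple objects of $\C$; in particular it is torsion-free. The same applies to $Gr(\C\otimes_k L)$ for any field extension $L/k$.

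I would first reduce to the case of a finite extension. Every object and every morphism of $\C\otimes_k\overline{k}$ is already defined over some finite subextension $k\subseteq L\subseteq\overline{k}$, so $\C\otimes_k\overline{k}$ is the filtered $2$-colimit of the categories $\C\otimes_k L$ over finite $L$, and hence $Gr(\C\otimes_k\overline{k})=\mathrm{colim}_L\,Gr(\C\otimes_k L)$ since $K_0$ commutes with filtered colimits. Consequently, if the extension-of-scalars map $Gr(\C)\to Gr(\C\otimes_k L)$ is injective for every finite $L/k$, then any element of $Gr(\C)$ dying in $Gr(\C\otimes_k\overline{k})$ already dies at some finite stage and is therefore zero; so it suffices to treat finite extensions.

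For a finite extension $L/k$ of degree $n=[L:k]$, consider the exact extension-of-scalars functor $E\colon\C\to\C\otimes_k L$, $V\mapsto V\otimes_k L$, and the exact restriction-of-scalars functor $\mathrm{Res}\colon\C\otimes_k L\to\C$ that regards an $L$-linear object as a $k$-linear one. Both are exact, hence induce homomorphisms on Grothendieck groups. The key computation is that the composite $\mathrm{Res}\circ E$ sends $V$ to $V\otimes_k L\cong V^{\oplus n}$ as an object of $\C$, so on $Gr(\C)$ it is multiplication by $n$. Since $Gr(\C)$ is torsion-free and $n\ge 1$, multiplication by $n$ is injective; therefore $E$ induces an injection $Gr(\C)\hookrightarrow Gr(\C\otimes_k L)$. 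Combined with the colimit reduction above, this proves the lemma.

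I expect the only points requiring care --- rather than genuine obstacles --- to be the verification that the base-changed category $\C\otimes_k L$ carries well-behaved exact extension- and restriction-of-scalars functors with $\mathrm{Res}\circ E\cong(-)^{\oplus n}$ (this is where the finiteness of $[L:k]$ enters essentially, as the identity fails for infinite extensions), and the identification of $Gr(\C\otimes_k\overline{k})$ with the filtered colimit of the $Gr(\C\otimes_k L)$. Both are standard facts about locally finite $k$-linear categories, so the heart of the argument is the torsion-freeness of $Gr(\C)$ together with the elementary observation that $\mathrm{Res}\circ E$ is multiplication by the degree.
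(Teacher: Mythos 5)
Your proof is correct, but it follows a genuinely different route from the paper's. The paper argues by reconstruction and structure theory: it identifies $\C$ with the category of finite-dimensional comodules over a coalgebra $A$, reduces to $A$ finite-dimensional (so $\C=\Rep(A^*)$), passes to the semisimple quotient $A^*/\Rad(A^*)$ (harmless, since $Gr$ only sees composition factors), and finally to the simple case $A^*=\Mat_n(D)$, $D$ a division algebra, where $Gr(A)=\mathbb{Z}$ and injectivity is clear because the unique simple remains a nonzero object after base change. You instead use a transfer argument: for finite $L/k$ of degree $n$, restriction after extension of scalars is multiplication by $n$ on the torsion-free group $Gr(\C)$, so each finite stage is injective, and the algebraically closed case follows by writing $Gr(\C\otimes_k\overline{k})$ as the filtered colimit of the groups $Gr(\C\otimes_k L)$. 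Your version avoids Artin--Wedderburn and the (slightly delicate) radical step --- note that $\Rad(A^*)$ need not base-change to the radical over $\overline{k}$, so the paper's reduction really rests on $Gr$ being computed from the semisimplification over the base field --- while the paper's version avoids your two auxiliary facts: the existence of exact functors $E$ and $\mathrm{Res}$ with $\mathrm{Res}\circ E\cong(-)^{\oplus n}$ (immediate in the comodule model $\C\otimes_k L=\Corep(A\otimes_k L)$, so in practice you would still invoke the same reconstruction the paper starts from), and the colimit identification, whose injectivity half --- composition series and isomorphisms over $\overline{k}$ descend to a finite subextension --- is precisely what your argument needs (surjectivity alone would not show that an element dying over $\overline{k}$ already dies at a finite stage). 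Both facts are indeed standard for locally finite categories, so your proof is complete as outlined.
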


\begin{proof} It is well known that $\C$ is equivalent to
the category of finite-dimensional $A-$comodules over $k$, where $A$
is a coalgebra over $k$. Let us denote $Gr(\C)$ by $Gr(A)$. We need
to show that the map $Gr(A)\to Gr(A\otimes_k \overline{k})$ is
injective. Clearly, we may assume that $A$ is finite-dimensional, so
$\C=\Rep(A^*)$. Then we can pass to the quotient of $A^*$ by its
radical and assume that $A^*$ is semisimple. So we can assume that
$A^*$ is simple, i.e., $A^*=Mat_n(D)$, $D$ a division algebra over
$k$. But in this case the claim is obvious since $Gr(A)=\mathbb{Z}$.
\end{proof}

\begin{corollary}\label{algcl} A tensor category $\C$ over $k$ is virtually
indecomposable if $\C\otimes_k
\overline{k}$ is virtually indecomposable. \qed
\end{corollary}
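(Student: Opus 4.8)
The plan is to derive the Corollary directly from the Lemma, using that extension of scalars to $\overline{k}$ is a tensor functor. Write $\bar\C := \C\otimes_k\overline{k}$ and let $F\colon \C\to\bar\C$, $X\mapsto X\otimes_k\overline{k}$, be the base-change functor. Since $F$ is exact and monoidal with $F(\mathbf 1)=\mathbf 1$, it induces a \emph{unital ring} homomorphism $\phi\colon Gr(\C)\to Gr(\bar\C)$, and the Lemma says precisely that $\phi$ is injective. I would argue contrapositively: assuming $\bar\C$ is virtually indecomposable, suppose $e\in Gr(\C)$ is a nontrivial central idempotent, and produce a nontrivial central idempotent in $Gr(\bar\C)$, a contradiction. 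The natural candidate is $\phi(e)$: it is idempotent because $\phi$ is multiplicative, and it is nontrivial because $\phi$ is injective and unital, so $\phi(e)\neq 0$ and $\phi(e)\neq\phi(1)=1$.

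The only point that is not purely formal is that $\phi(e)$ must be central in all of $Gr(\bar\C)$, not merely in the subring $\phi(Gr(\C))$. When $\C$ is braided (in particular symmetric), which covers the Tannakian and super-Tannakian applications that motivate the paper, both Grothendieck rings are commutative, centrality is vacuous, and the argument above is already complete: an injective unital homomorphism of commutative rings sends nontrivial idempotents to nontrivial idempotents. This is presumably why the Corollary is stated without proof.

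In the general, possibly noncommutative, case the centrality of $\phi(e)$ is the hard part, and it is genuinely not automatic: a simple object $Y$ of $\bar\C$ need not lie in the image of $\phi$, since base change can split one simple object of $\C$ into an entire Galois orbit of simples of $\bar\C$, so the individual classes $[Y]$ are not generally of the form $\phi([X])$. Knowing only that $\phi(e)$ commutes with every $[F(X)]=\phi([X])$ does not at once give commutation with each $[Y]$. To close this gap I would use the action of $\Aut(\overline{k}/k)$ on $\bar\C$, hence on $Gr(\bar\C)$: the image of $\phi$ lies in the invariant subring, the simples occurring in a fixed $\phi([X])$ form a single orbit, and $\phi(e)$ is invariant, which should allow one to isolate the individual $[Y]$ and deduce $[Y]\phi(e)=\phi(e)[Y]$. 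Over a perfect field this is ordinary Galois descent; for a general $k$ one first passes to the separable closure and then disposes of the purely inseparable part, over which base change does not refine the classes of simple objects. Granting centrality, $\phi(e)$ is a nontrivial central idempotent of $Gr(\bar\C)$, the desired contradiction.
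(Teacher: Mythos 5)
Your core argument coincides with the paper's: the corollary is stated with a \qed precisely because the author regards it as immediate from the preceding lemma, i.e.\ the injective unital ring homomorphism $\phi\colon Gr(\C)\to Gr(\C\otimes_k\overline{k})$ carries a nontrivial idempotent to a nontrivial idempotent. You are also right to observe that this one-liner is genuinely complete only where centrality takes care of itself --- in particular in the symmetric situations (affine group schemes, formal groups, (super)Tannakian categories) to which the paper actually applies the corollary, where both Grothendieck rings are commutative. Up to that point your proposal and the paper's (implicit) proof are the same, and your diagnosis that the noncommutative case is the only nonformal point is more careful than the paper's own treatment.

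However, your proposed Galois-descent repair of that case has a genuine gap: invariance of $\phi(e)$ under $\Aut(\overline{k}/k)$ together with commutation with all orbit sums does \emph{not} allow you to ``isolate the individual $[Y]$''. Concretely, consider the quadratic form $\C$ of $\Vect_{S_3}$ over $k=\mathbb{Q}$ obtained by twisting the constant group scheme $S_3$ along a quadratic extension through conjugation by a transposition $t$. Then $Gr(\C\otimes_k\overline{k})=\mathbb{Z}[S_3]$, the Galois action is conjugation by $t$, and rationally the image of $\phi$ is the span of the orbit sums $1$, $t$, $t'+t''$, $c+c^{-1}$ (the other two transpositions and the two $3$-cycles). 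The element $e=\frac{1}{2}(1+t)$ is an invariant idempotent in this span; it is central in the invariant subring and commutes with every orbit sum (e.g.\ $t(t'+t'')=c+c^{-1}=(t'+t'')t$), yet $et'\neq t'e$ because $tt'\neq t't$, so it is not central in $\mathbb{Q}[S_3]$. Thus the purely formal descent step you invoke is false, and any complete argument for the noncommutative case would have to exploit the \emph{integrality} of the coefficients of a central idempotent of $Gr(\C)$ (in the spirit of the positivity argument proving Theorem \ref{based}), which your sketch never uses. To be fair, the paper's \qed glosses over exactly the same point, and for every use the paper makes of the corollary your braided-case argument already suffices; but your closing claim that centrality ``should'' follow by isolating the $[Y]$ must be withdrawn or replaced by an integrality argument.
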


Therefore, throughout the paper we shall work over an
\textbf{algebraically closed field $k$}.

\subsection{Based rings.} In Section 3.1 we recall the definition of a
unital based ring, and then prove in Section 3.2 the following
theorem about them.

\begin{theorem}\label{based}
Let $A$ be any unital based ring. Then $A$ is virtually
indecomposable.
\end{theorem}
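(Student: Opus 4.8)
The plan is to work entirely inside the based ring $A$ itself, using only the canonical inner product attached to its basis, and to avoid any appeal to Perron--Frobenius theory or to the classification of the semisimplification $A\otimes\CC$. Write the distinguished basis as $\{b_i\}_{i\in I}$ with unit $b_0=1$, let $i\mapsto i^\ast$ be the involution of $I$ and $x\mapsto x^\ast$ the induced anti-automorphism with $b_i^\ast=b_{i^\ast}$, and let $\tau\colon A\to\mathbb{Z}$ read off the coefficient of $b_0$. The defining axiom $\tau(b_ib_j)=\delta_{i,j^\ast}$ says exactly that the symmetric form $\langle x,y\rangle:=\tau(xy^\ast)$ makes $\{b_i\}$ an orthonormal basis, and the symmetry $\delta_{i,j^\ast}=\delta_{j,i^\ast}$ shows $\tau$ is a trace; consequently multiplication is adjointable, $\langle xy,z\rangle=\langle y,x^\ast z\rangle$, i.e. $L_{x^\ast}$ is the adjoint of left multiplication $L_x$. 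These are finite computations, valid whether or not $I$ is finite, so no boundedness or topology is needed.

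Now let $e=\sum_{i}e_ib_i$ (with $e_i\in\mathbb{Z}$, finitely many nonzero) be a central idempotent; I want to force $e\in\{0,1\}$. The key move is to pass from $e$ to $g:=ee^\ast$. Since $e$ is central it commutes with $e^\ast$, so $g=e^\ast e$ as well, and a one-line computation gives $g^2=e^2(e^\ast)^2=ee^\ast=g$; moreover $g^\ast=g$ and $g$ is central. Thus both $g$ and $1-g$ are self-adjoint central idempotents, that is, orthogonal projections for $\langle\,,\rangle$. The point of this step is that self-adjointness, which need not hold for $e$ itself, is automatic for $g$, and it is what unlocks positivity.

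Applying positivity to the unit vector $b_0$ then pins down $e$. On one hand $\langle gb_0,b_0\rangle$ is the coefficient of $b_0$ in $g=ee^\ast$, which equals $\tau(ee^\ast)=\langle e,e\rangle=\sum_i e_i^2$. On the other hand, because $1-g$ is a self-adjoint idempotent, $\langle(1-g)b_0,b_0\rangle=\langle(1-g)b_0,(1-g)b_0\rangle=\|(1-g)b_0\|^2\ge 0$. Adding these two expressions and using $\langle b_0,b_0\rangle=1$ yields $\sum_i e_i^2\le 1$, so the integer $\sum_i e_i^2$ is either $0$ or $1$. If it is $0$ then $e=0$. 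If it is $1$ then $e=\pm b_i$ for a single index $i$, whence $g=b_ib_{i^\ast}$ has unit-coefficient $1$; the same projection estimate forces $gb_0=b_0$, i.e. $b_ib_{i^\ast}=1$, so $e$ is invertible (with inverse $e^\ast$), and an invertible idempotent equals $1$. Hence $e=1$.

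The step I expect to be the crux is the passage to $g=ee^\ast$: recognizing that one should not try to prove $e=e^\ast$ directly (which seems to require semisimplicity of $A\otimes\CC$, and hence some finiteness or analytic input), but instead work with the manifestly self-adjoint central idempotent $g$, whose unit-coefficient is exactly $\sum_i e_i^2$. The only other place needing a little care is the endgame when $\sum_i e_i^2=1$, where one must rule out a nontrivial group-like idempotent $b_i$ using invertibility rather than positivity.
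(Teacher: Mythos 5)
Your proof is correct, and it runs on the same engine as the paper's: positivity of the canonical inner product on the based ring plus integrality of the coefficient $\sum_i e_i^2$. The difference is tactical. You deliberately avoid proving $e=e^*$, believing that step to require semisimplicity of $A\otimes\mathbb{C}$ or some finiteness input, and instead route the argument through the automatically self-adjoint element $g=ee^*$; this costs you an extra endgame (the case $\sum_i e_i^2=1$, disposed of via $g=1$ and the fact that an invertible idempotent equals $1$). But your premise about the crux is mistaken: the paper proves $e=e^*$ directly, in one line, with exactly the tools you allow yourself. Since $e$ is central it is normal ($ee^*=e^*e$), and then
$$\bigl(e(1-e^*),\,e(1-e^*)\bigr)=\bigl(e^*e(1-e^*),\,1-e^*\bigr)=\bigl(ee^*(1-e^*),\,1-e^*\bigr)=0,$$
using $(e^*)^2=e^*$; positivity gives $e(1-e^*)=0$, so $e=ee^*$ and hence $e=e^*$. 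With self-adjointness in hand the paper finishes faster than you do: for a central idempotent $e\ne 1$, the orthogonal projection $e$ satisfies $0\le (e,e)=(e\cdot 1,e\cdot 1)<(1,1)=1$ strictly (equality would force $e\cdot 1=1$), and integrality kills $e$ at once, with no case analysis. So your detour through $g=ee^*$ is sound and self-contained, and your observation that $\tau(g)=\sum_i e_i^2$ is a clean way to package the estimate, but it buys generality that was never at risk; the two arguments are otherwise the same, and both work without any rank-finiteness assumption on the basis.
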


Recall that a $k-$linear abelian rigid tensor category $\C$ is said
to have the {\em Chevalley property} if the tensor product of any
two semisimple objects of $\C$ is also semisimple. In other words,
the subcategory $\C_{ss}$ of semisimple objects in $\C$ is a tensor
subcategory. For example, in characteristic zero, $\C=\Rep(G)$ and
$\C=\Rep(\mathfrak{g})$, where $G$ is any group and $\frak{g}$ is
any Lie algebra, have the Chevalley property \cite{c}. Of course, if
$\C$ is semisimple (e.g., a fusion category) then $\C$ has the
Chevalley property.

Now, if  $\C$ has the Chevalley property then $Gr(\C)=Gr(\C_{ss})$,
so $Gr(\C)$ is a unital based ring. Hence, Theorem \ref{based}
implies the following corollary.

\begin{corollary}\label{based1}
Let $\C$ be a $k-$linear abelian rigid tensor category. If $\C$ has
the Chevalley property then $\C$ is virtually indecomposable. \qed
\end{corollary}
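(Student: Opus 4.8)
The plan is to reduce the statement directly to Theorem \ref{based} by exhibiting $Gr(\C)$ as a unital based ring. The two facts I would establish are that $Gr(\C)=Gr(\C_{ss})$ as rings and that $Gr(\C_{ss})$ satisfies the based-ring axioms; the substantive content then lies entirely in Theorem \ref{based}, which I may assume.

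First I would recall that since $\C$ is locally finite, every object has finite length, so as an abelian group $Gr(\C)$ is free on the set $\Irr(\C)$ of isomorphism classes of simple objects, the class of an object being the sum of its Jordan--H\"older constituents. The simple objects of $\C$ are precisely the simple objects of the full subcategory $\C_{ss}$ of semisimple objects, so $Gr(\C)$ and $Gr(\C_{ss})$ coincide as abelian groups. For the ring structures, note that on basis elements the product is $[X][Y]=[X\otimes Y]$; when $X$ and $Y$ are simple (hence semisimple) the Chevalley property guarantees that $X\otimes Y$ is again semisimple, so its class is literally the sum of its simple summands, with no contribution from a nontrivial filtration. Thus the two multiplications agree on the common basis, and $Gr(\C)=Gr(\C_{ss})$ as rings.

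Next I would verify the based-ring axioms for $Gr(\C_{ss})$. The distinguished basis is $\Irr(\C)$; the unit object $\mathbf{1}$ is simple (as $\End(\mathbf{1})=k$ in the standard convention), so $[\mathbf{1}]$ is a basis element and serves as the multiplicative identity. The structure constants are the multiplicities of simple objects in tensor products, hence nonnegative integers. Finally, rigidity supplies the duality involution $[X]\mapsto[X^{*}]$ on $\Irr(\C)$, and the adjunction isomorphisms $\Hom(X\otimes Y,Z)\cong\Hom(Y,X^{*}\otimes Z)$ yield exactly the compatibility of the structure constants with this involution required of a unital based ring. With $Gr(\C)=Gr(\C_{ss})$ identified as such a ring, Theorem \ref{based} shows it has no nontrivial central idempotents, i.e. $\C$ is virtually indecomposable. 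The genuine difficulty is packaged into Theorem \ref{based}, which is assumed here; within the corollary the only point requiring care is the verification of the duality axiom from rigidity, together with the observation that the Chevalley property is precisely what makes the passage from $Gr(\C)$ to the semisimple based ring $Gr(\C_{ss})$ an equality rather than merely a comparison of rings.
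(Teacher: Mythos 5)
Your proposal is correct and follows exactly the paper's route: the paper likewise observes that the Chevalley property gives $Gr(\C)=Gr(\C_{ss})$, notes (via its Example on semisimple rigid categories) that this is a unital based ring with basis the simple classes and involution given by duality, and concludes by Theorem \ref{based}. Your write-up merely spells out the based-ring axioms (positivity of structure constants, the adjunction $\Hom(X\otimes Y,Z)\cong\Hom(Y,X^{*}\otimes Z)$ giving the compatibility with the involution) that the paper leaves implicit, which is a harmless and accurate elaboration.
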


\begin{remark}
In general it is not true that the representation categories of
groups and Lie algebras in positive characteristic have the
Chevalley property, and likewise for supergroups and Lie
superalgebras in any characteristic.
\end{remark}

\subsection{The Hopf algebra case.} In Section 4 we prove
the following innocent looking result, which will turn out to play
the key role in proving our results concerning (super)groups and
(super)Lie algebras.

\begin{theorem}\label{mainhopf}
Let $H$ be a (not necessarily commutative) Hopf algebra over a field
$k$, and let $\Corep(H)$ denote the tensor category of
finite-dimensional $H-$comodules over $k$. Suppose that $I$ is a
Hopf ideal in $H$ such that $\bigcap_{n\ge 1}I^n=0$. Let $R$ be any
commutative ring and, if the characteristic of $k$ is $p>0$, assume
that $\bigcap_{n\ge 1}p^nR=0$. Then, if $\Corep(H/I)$ is virtually
indecomposable over $R$ then so is $\Corep(H)$.
\end{theorem}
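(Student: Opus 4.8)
The plan is to exploit the Hopf surjection $\pi\colon H\twoheadrightarrow H/I$ to compare the two Grothendieck rings. Write $\bar H:=H/I$. Composing a coaction $\rho\colon V\to V\otimes H$ with $\mathrm{id}_V\otimes\pi$ turns any finite-dimensional $H$-comodule into an $\bar H$-comodule, and since $\pi$ is a coalgebra map this defines a monoidal functor $F\colon\Corep(H)\to\Corep(\bar H)$. Because kernels and cokernels of comodule maps are computed on underlying vector spaces, $F$ is exact, so it induces a unital ring homomorphism $F_*\colon Gr(H)\to Gr(\bar H)$ and, after extending scalars, a ring homomorphism $f\colon R\otimes_{\mathbb Z}Gr(H)\to R\otimes_{\mathbb Z}Gr(\bar H)$.

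First I would record that $f$ is surjective. Every simple $\bar H$-comodule has a finite-dimensional coefficient coalgebra $\bar C\subseteq\bar H$, and using $\bigcap_{n\ge1}I^n=0$ one shows that $\bar C$ lifts through $\pi$ to a simple subcoalgebra $C\subseteq H$ on which $\pi$ is an isomorphism (the point being that $\pi^{-1}(\bar C)$ is a conilpotent extension of $\bar C$, so a copy of $\bar C$ splits off its coradical). Hence every simple $\bar H$-comodule is of the form $F(S)$ for a simple $H$-comodule $S$, so the classes $[\bar S]$, which form an $R$-basis of $R\otimes_{\mathbb Z}Gr(\bar H)$, all lie in the image of $f$.

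Now comes the formal reduction. Let $e$ be a central idempotent of $R\otimes_{\mathbb Z}Gr(H)$. Then $f(e)$ is a central idempotent of $R\otimes_{\mathbb Z}Gr(\bar H)$ (here surjectivity of $f$ is used, to know $f(e)$ is central in the full target and not merely in the image), so virtual indecomposability of $\Corep(\bar H)$ over $R$ gives $f(e)\in\{0,1\}$. Replacing $e$ by $1-e$ if necessary, assume $f(e)=0$, i.e. $e\in\mathfrak a:=\ker f$. Since $e$ is idempotent, $e=e^m\in\mathfrak a^m$ for every $m\ge1$, whence $e\in\bigcap_{m\ge1}\mathfrak a^m$. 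Thus the entire theorem reduces to the single separatedness statement
\[
\bigcap_{m\ge1}\mathfrak a^m=0,\qquad \mathfrak a=\ker\bigl(f\colon R\otimes_{\mathbb Z}Gr(H)\to R\otimes_{\mathbb Z}Gr(\bar H)\bigr).
\]

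Establishing this separatedness is the heart of the matter, and it is exactly here that the hypotheses $\bigcap_n I^n=0$ and (in characteristic $p$) $\bigcap_n p^nR=0$ must be consumed. The idea is to transport the $I$-adic filtration of $H$ to a filtration of the Grothendieck ring: an element of $\mathfrak a$ is a virtual comodule that is ``trivial modulo $I$'' on coefficients, and multiplying such virtual comodules should drive their coefficients into ever higher powers of $I$; together with $\bigcap_n I^n=0$ on the (finite-dimensional) coefficient coalgebras this would force $\bigcap_m\mathfrak a^m=0$. I expect the genuine obstacle to be the characteristic-$p$ bookkeeping in this comparison: passing from ``coefficients in $I^m$'' to membership in $\mathfrak a^m$ introduces factors of $p$ at each step, owing to the additivity of the $p$-power (Frobenius) map and to divided-power phenomena, so one controls the filtration only up to $p$-power multiples and must invoke $\bigcap_n p^nR=0$ to close the gap. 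The formal reduction above is routine; this separatedness estimate in characteristic $p$ is the step I would expect to require real work.
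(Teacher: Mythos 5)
There is a genuine gap, in fact two. First, your surjectivity lemma is false. Take $k$ of characteristic zero, $H=\mathcal{O}(SL_2)$, and $I$ the defining (Hopf) ideal of the maximal torus $T$; since $\mathcal{O}(SL_2)$ is a Noetherian domain, Krull's intersection theorem gives $\bigcap_{n\ge 1}I^n=0$, so the hypotheses of the theorem hold. But $f$ is then the restriction map $Gr(SL_2)=\mathbb{Z}[V]\to Gr(T)=\mathbb{Z}[z,z^{-1}]$, whose image is the subring of Weyl-invariants $\mathbb{Z}[z+z^{-1}]$: the simple $T$-comodule $z$ is not $F(S)$ for any simple $S$ (restrictions of $SL_2$-characters are symmetric in $z\leftrightarrow z^{-1}$), and $[z]$ is not even in the image of $f$. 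Relatedly, $\pi^{-1}(\bar C)$ need not be a subcoalgebra, so the ``splitting off the coradical'' step has no starting point. Your instinct that something must guarantee centrality of $f(e)$ is reasonable, but surjectivity is not available; in the paper this issue is immaterial because in all its applications the relevant Grothendieck rings are commutative.

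Second, and more seriously, your reduction to the separatedness statement $\bigcap_{m\ge 1}\mathfrak{a}^m=0$ replaces the theorem by a statement that is at best much harder, and your sketch of it stalls exactly where you predict. The available tool (the paper's Proposition \ref{injective}: the character map $ch\colon k\otimes_{\mathbb{Z}}Gr(\C)\hookrightarrow H$ is injective) shows that an element $y\in\bigcap_m\mathfrak{a}^m$ has $ch(y')\in\bigcap_m(R\otimes_{\mathbb{Z}}I^m)=0$, where $y'$ is the image of $y$ in $R\otimes_{\mathbb{Z}}k\otimes_{\mathbb{Z}}Gr(\C)$; in characteristic $p$ this yields only $y\in p\,(R\otimes_{\mathbb{Z}}Gr(\C))$, and writing $y=py_1$ one knows $py_1\in\mathfrak{a}^m$ but has no way to conclude $y_1\in\mathfrak{a}^m$, so the divisibility cannot be iterated and $\bigcap_n p^nR=0$ never engages. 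The paper's proof avoids any such filtration estimate precisely by \emph{never discarding idempotency}, which you do the moment you pass from $e$ to a general element of $\bigcap_m\mathfrak{a}^m$. It uses the idempotent property twice, each time to upgrade one containment to all powers: if all coefficients of $E$ were divisible by $p$, then $E=pF$ forces $E=E^n=p^nF^n\in p^n(R\otimes_{\mathbb{Z}}Gr(\C))$ for all $n$, contradicting $\bigcap_n p^nR=0$; hence $E'\neq 0$, and then $ch(E')=ch(E')^n\in R\otimes_{\mathbb{Z}}I^n$ for all $n$, so $ch(E')\in\bigcap_n(R\otimes_{\mathbb{Z}}I^n)=0$, a contradiction. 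So the missing idea is not a hard characteristic-$p$ bookkeeping of an $\mathfrak{a}$-adic filtration, but the elementary observation that a single idempotent lying in an ideal lies in all of its powers, applied after transporting it into $H$ via the character embedding.
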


\begin{remark}\label{lazy}
In fact, Theorem \ref{mainhopf} holds also, with the same proof, in
the topological case (i.e., when $H$ is a topological Hopf algebra;
see below).
\end{remark}

\subsection{The group case.} In Section 5 we use Theorem
\ref{mainhopf} to prove increasingly strong results, culminating in
the following theorem.

\begin{theorem}\label{maingroup}
Let $k$ be any field, and let $G$ be an affine group scheme over
$k$. Let $S$ be the set of all primes not equal to the
characteristic of $k$ and not dividing $|G/G^0|$. Then
$\Spec(\mathbb{Z}[S^{-1}]\otimes _\mathbb{Z} Gr(G))$ is connected.
\end{theorem}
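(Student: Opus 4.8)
The plan is to read the statement as an assertion about virtual indecomposability. Since $\Rep(G)$ is symmetric, $Gr(G)$ is commutative, so $\Spec(\mathbb{Z}[S^{-1}]\otimes_\mathbb{Z}Gr(G))$ is connected if and only if $R\otimes_\mathbb{Z}Gr(G)$ has no nontrivial idempotents, i.e. if and only if $\Rep(G)$ is virtually indecomposable over $R:=\mathbb{Z}[S^{-1}]$. By Corollary \ref{algcl} I may assume $k=\overline{k}$. First I would reduce to the finite type case: writing $G=\varprojlim_i G_i$ as the filtered inverse limit of its affine algebraic quotients gives $\Rep(G)=\varinjlim_i\Rep(G_i)$, hence $R\otimes Gr(G)=\varinjlim_i R\otimes Gr(G_i)$. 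Any idempotent of a filtered colimit of unital rings is detected at a finite stage, and since $S$ omits the characteristic $p$ and every prime dividing the Steinitz number $|G/G^0|$, it omits every prime dividing each finite $|G_i/G_i^0|$. Thus it suffices to prove the theorem, over the same ring $R$, for a fixed affine algebraic group $G$ with $\Gamma:=G/G^0$ finite of order $m$.

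Next I would settle the connected case $G=G^0$ using Theorem \ref{mainhopf}. Take $H=\mathcal{O}(G)$ and let $I=\mathfrak{m}$ be the augmentation ideal (the ideal of the identity $e\in G$); it is a Hopf ideal with $H/I=k$, so $\Corep(H/I)=\Ve$ is virtually indecomposable over any localization of $\mathbb{Z}$. Because $G$ is connected and of finite type, $\mathcal{O}(G)$ is Noetherian and Krull's intersection theorem gives $\bigcap_{n\ge1}\mathfrak{m}^n=0$. Moreover $p\notin S$, so $p$ is not invertible in $R$ and $\bigcap_{n\ge1}p^nR=0$. Theorem \ref{mainhopf} then yields that $\Corep(H)=\Rep(G^0)$ is virtually indecomposable over $R$.

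The step I expect to be the real obstacle is passing from $G^0$ to $G$ through the extension $1\to G^0\to G\to\Gamma\to1$. Given an idempotent $e\in R\otimes Gr(G)$, restriction along $G^0\hookrightarrow G$ is a ring homomorphism $R\otimes Gr(G)\to R\otimes Gr(G^0)$, so by the connected case $\mathrm{res}(e)\in\{0,1\}$; replacing $e$ by $1-e$ I may assume $\mathrm{res}(e)=0$. Conceptually, a nontrivial idempotent is a decomposition of $\Rep(G)$ into a product of tensor subcategories, and since $\Rep(G^0)$ is indecomposable this decomposition must be constant along $G^0$, so it should descend to a decomposition of $\Rep(\Gamma)=\Rep(G/G^0)$. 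Making this precise is the heart of the matter: restriction is far from injective, its kernel being governed by inflation from $\Gamma$, so I would use Clifford theory (the decomposition of $\mathrm{res}[V]$ of a $G$-irreducible into a $\Gamma$-orbit of $G^0$-irreducibles, together with the projection formula $\mathrm{ind}(\mathrm{res}(x))=x\cdot[\mathrm{reg}_\Gamma]$) to show that such an $e$ arises from a central idempotent of $R\otimes Gr(\Gamma)$. The delicate point is that $m=|\Gamma|$ is \emph{not} inverted in $R$, so one cannot simply split off the regular representation of $\Gamma$; this is exactly where retaining the primes dividing $m$ is essential.

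Finally I would invoke the finite group case, which asserts that $\Spec(R\otimes Gr(\Gamma))$ is connected and which recovers Serre's Theorem \ref{serre}. When $p\nmid m$ the category $\Rep(\Gamma)$ is semisimple, $Gr(\Gamma)$ is a unital based ring, and Theorem \ref{based} gives indecomposability over $\mathbb{Z}$; it then remains to see that no idempotent is created upon inverting the primes in $S$, i.e. that the components of $\Spec(Gr(\Gamma)\otimes\Q)$ are glued precisely over the primes dividing $m$, which $R$ retains. When $p\mid m$ one runs the same argument with Brauer characters in place of ordinary characters. Combining this with the descent of the previous paragraph forces $e\in\{0,1\}$, completing the proof; as noted, the genuine difficulty is the disconnected step, namely controlling idempotents across the components $G/G^0$ without inverting $m$ and gluing the connected result for $G^0$ to the finite group result for $\Gamma$.
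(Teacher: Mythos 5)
Your outer reductions are fine (passing to $\overline{k}$, the filtered-colimit reduction to the finite-type case, and the connected case via Theorem \ref{mainhopf} with $I=\mathfrak{m}$ --- the latter is essentially the paper's argument for formal groups and for Theorem \ref{lie} in Section 4.3, and it is a legitimate shortcut for connected \emph{reduced} $G$; note only that in characteristic $p$ the algebraic quotients of a group scheme may be non-reduced, where ``Noetherian $+$ Krull'' is not automatic since $1-a$, $a\in\mathfrak{m}$, could a priori be a zero divisor --- the paper instead strips nilpotents at the end, in Section 5.5, using that the nilradical is a Hopf ideal with $\bigcap_{n\ge 1}I^n=0$). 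The decisive problem is that the two steps you yourself identify as ``the heart of the matter'' are left as intentions, not arguments. (i) You give no proof that an idempotent $e$ with $\mathrm{res}(e)=0$ descends to $\Gamma=G/G^0$: Clifford theory plus the projection formula only yield $e\cdot[k[\Gamma]]=0$ in $R\otimes_{\mathbb{Z}}Gr(G)$, which does not force $e$ to lie in the image of inflation, and the assertion that the kernel of restriction is ``governed by inflation'' is unsubstantiated. (ii) The finite-group case over $R=\mathbb{Z}[S^{-1}]$ does \emph{not} follow from Theorem \ref{based}: that proof rests on $(e,e)$ being a nonnegative \emph{integer} strictly less than $1$, which breaks as soon as any prime is inverted; your claim that the components of $\Spec(\mathbb{Q}\otimes_{\mathbb{Z}} Gr(\Gamma))$ are glued precisely over the primes dividing $|\Gamma|$ is the entire arithmetic content of the localized statement and is asserted, not proven (and ``the same argument with Brauer characters'' in the modular case is a further unproven step). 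As written, the proposal establishes only the connected reduced case.

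It is worth seeing how differently the paper proceeds: it never decomposes along $1\to G^0\to G\to\Gamma\to 1$ and uses no induction/restriction or Clifford theory. Instead, via the character embedding $ch\colon k\otimes_{\mathbb{Z}}Gr(\C)\hookrightarrow H$ (Proposition \ref{injective}, combined with the divide-by-$p$ trick, which is why $p$ cannot be inverted), an idempotent of $R\otimes_{\mathbb{Z}}Gr(G)$ becomes an idempotent regular class function on $G$ with values in $\{0,1\}$; its constancy is checked pointwise, $ch(e)(g)=ch(e)(1)$, by restricting to the Zariski closure of the cyclic group generated by $g$. This reduces everything to \emph{abelian} algebraic groups, where $Gr(G)=Gr(G_s)$ and $G_s\cong\mathbb{G}_m^n\times A$ makes all idempotents explicitly computable, with denominators dividing $|A|$ --- this is the single place where the primes dividing the component group enter, and it treats all components of $G$ simultaneously; pro-algebraic groups then follow by colimit and group schemes by the nilradical plus Theorem \ref{mainhopf}. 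To salvage your route you would have to genuinely prove (i) and (ii); for (ii) you would in effect be re-proving Serre's theorem with localized coefficients, e.g.\ via the explicit description of idempotents of $\mathbb{Q}\otimes_{\mathbb{Z}}Gr(\Gamma)$ as sums of point-idempotents over Galois-stable sets of ($p$-regular) classes with denominators dividing $|\Gamma|$ --- at which point the character-function mechanism of the paper is doing the same work more uniformly.
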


Theorem \ref{maingroup} generalizes to formal groups. Recall that a
{\em formal group} $G$ over a field $k$, whose subset of closed
points (= reduced part) is the affine proalgebraic group
$\overline{G}$ over $k$, is the following algebraic structure. We
have a structure algebra $\mathcal{O}(G)$ over $k$, which has an
ideal $I$ such that $\mathcal{O}(G)/I=\mathcal{O}(\overline{G})$,
and $\mathcal{O}(G)$ is complete and separated in the topology
defined by $I$ (i.e., $\mathcal{O}(G)=\underleftarrow{\lim}
\mathcal{O}(G)/I^m$). Finally, we have a cocommutative coproduct
$\Delta: \mathcal{O}(G)\to \mathcal{O}(G)\widehat{\otimes
}\mathcal{O}(G)$, where the latter completed tensor product is
$\underleftarrow{\lim}(\mathcal{O}(G)/I^m\otimes
\mathcal{O}(G)/I^m)$, defining a topological Hopf algebra structure
on $\mathcal{O}(G)$, such that $I$ is a Hopf ideal, and the
isomorphism $\mathcal{O}(G)/I\to \mathcal{O}(\overline{G})$ is a
Hopf algebra isomorphism.

Thus, combining Theorems \ref{mainhopf} and \ref{maingroup}, we
obtain the following result.

\begin{theorem}\label{mainformal}
Let $k$ be any field, and let $G$ be a formal group over $k$ with
reduced part $\overline{G}$. Let $S$ be the set of all primes not
equal to the characteristic of $k$ and not dividing
$|\overline{G}/\overline{G}^0|$. Then
$\Spec(\mathbb{Z}[S^{-1}]\otimes _\mathbb{Z} Gr(G))$ is connected.
\qed
\end{theorem}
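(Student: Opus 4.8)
The plan is to reduce Theorem \ref{mainformal} to the already-established Theorem \ref{maingroup} by an application of Theorem \ref{mainhopf} in its topological form (Remark \ref{lazy}). The key observation is that a formal group $G$ over $k$ is, by definition, encoded by a topological Hopf algebra $\mathcal{O}(G)$ carrying an ideal $I$ with $\mathcal{O}(G)/I=\mathcal{O}(\overline{G})$, such that $\mathcal{O}(G)$ is complete and separated in the $I$-adic topology. The comodule category $\Corep(\mathcal{O}(G))=\Rep(G)$, and passing to the quotient by $I$ recovers $\Rep(\overline{G})$, the representation category of the reduced part.

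First I would verify that the hypotheses of Theorem \ref{mainhopf} are met with $H=\mathcal{O}(G)$ and the Hopf ideal $I$. The condition $\bigcap_{n\ge1}I^n=0$ is precisely the separatedness of $\mathcal{O}(G)$ in the $I$-adic topology, which is part of the definition of a formal group; so this hypothesis holds automatically. Next I would take $R=\mathbb{Z}[S^{-1}]$, where $S$ is the set of primes not equal to $\chara(k)$ and not dividing $|\overline{G}/\overline{G}^0|$. If $\chara(k)=p>0$, the secondary hypothesis $\bigcap_{n\ge1}p^nR=0$ must be checked: since $p\in S$ is forbidden from $S$ by construction, $p$ remains a nonunit in $R=\mathbb{Z}[S^{-1}]$, and one sees that $\bigcap_{n\ge1}p^nR=0$ because localizing away from primes other than $p$ does not make $p$ invertible and the $p$-adic valuation still separates.

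Second I would invoke Theorem \ref{maingroup} applied to the affine proalgebraic group $\overline{G}$: with the same set $S$ of primes, $\Spec(\mathbb{Z}[S^{-1}]\otimes_\mathbb{Z}Gr(\overline{G}))$ is connected, i.e., $\Corep(\mathcal{O}(\overline{G}))=\Corep(\mathcal{O}(G)/I)$ is virtually indecomposable over $R=\mathbb{Z}[S^{-1}]$. Here one should note that Theorem \ref{maingroup} is stated for affine group schemes, and $\overline{G}$ is proalgebraic (an inverse limit of affine algebraic groups); I would remark that the result extends to the proalgebraic setting since the Grothendieck ring is the union of those of the finite-level quotients and a central idempotent lives at some finite level, and the relevant prime-set condition is governed by the component group $\overline{G}/\overline{G}^0$. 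With the virtual indecomposability of $\Corep(H/I)$ over $R$ in hand and the hypotheses verified, Theorem \ref{mainhopf} (topological version) yields that $\Corep(H)=\Rep(G)$ is virtually indecomposable over $R$, which is exactly the assertion that $\Spec(\mathbb{Z}[S^{-1}]\otimes_\mathbb{Z}Gr(G))$ is connected.

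The main obstacle I anticipate is the bookkeeping around the topological completion: ensuring that Theorem \ref{mainhopf} genuinely applies to the completed Hopf algebra $\mathcal{O}(G)$ with its completed coproduct $\Delta\colon\mathcal{O}(G)\to\mathcal{O}(G)\widehat{\otimes}\mathcal{O}(G)$, rather than to an ordinary Hopf algebra. This is precisely what Remark \ref{lazy} asserts—that the proof of Theorem \ref{mainhopf} goes through verbatim in the topological case—so the substantive content is confined to that earlier result, and the deduction here is essentially formal. A secondary point requiring care is confirming that the component group of the reduced part $\overline{G}$, rather than some notion of components of the full formal group $G$, is the correct invariant controlling the admissible primes; this is natural since the infinitesimal directions of $G$ contribute only unipotent (hence connected) structure and do not affect the component group, so $|\overline{G}/\overline{G}^0|$ indeed governs the obstruction.
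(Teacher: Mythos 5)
Your proposal is correct and follows exactly the paper's route: the paper derives Theorem \ref{mainformal} precisely by combining the topological version of Theorem \ref{mainhopf} (Remark \ref{lazy}), applied to $H=\mathcal{O}(G)$ with its defining Hopf ideal $I$ and $R=\mathbb{Z}[S^{-1}]$, with Theorem \ref{maingroup} applied to the reduced part $\overline{G}$. Your supplementary verifications (separatedness gives $\bigcap_{n\ge 1}I^n=0$, and $p\notin S$ gives $\bigcap_{n\ge 1}p^nR=0$) are accurate, and your worry about the proalgebraic case is already handled, since Theorem \ref{maingroup} is stated for arbitrary affine group schemes and its proof (Section 5.4) treats proalgebraic groups explicitly.
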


Therefore, as an immediate corollary of Theorem \ref{mainformal}
(the case $\overline{G}=1$), we deduce a positive answer to the
second part of Serre's Question \ref{serreq}. Nevertheless, in
Section 4.3 we shall also give a self contained proof of this
theorem in the positive characteristic case.

\begin{theorem}\label{lie}
Let $\mathfrak{g}$ be a Lie algebra over any field $k$ and let
$\C:=\Rep(\mathfrak{g})$ be the category of finite-dimensional
representations of $\mathfrak{g}$ over $k$. Then
$\Spec(Gr(\mathfrak{g}))$ is connected. \qed
\end{theorem}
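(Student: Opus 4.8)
The plan is to deduce Theorem \ref{lie} from Theorem \ref{mainhopf} by realizing $\Rep(\mathfrak{g})$ as the comodule category of an appropriate (topological) Hopf algebra carrying a Hopf ideal with vanishing intersection of powers. Since the statement concerns connectedness of $\Spec(Gr(\mathfrak{g}))$ over $k$ in arbitrary characteristic, and since by Corollary \ref{algcl} we may assume $k$ is algebraically closed, the characteristic-zero case is already covered by Serre's Theorem \ref{serre} (equivalently by the Chevalley-property Corollary \ref{based1}, as $\Rep(\mathfrak{g})$ has the Chevalley property in characteristic zero). So the real content is positive characteristic, which the excerpt explicitly flags as the case needing a self-contained argument.

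First I would recall that a finite-dimensional representation of $\mathfrak{g}$ is the same as a comodule over the restricted (or suitable) dual of the universal enveloping algebra $U(\mathfrak{g})$. More precisely, let $H$ be the Hopf algebra of matrix coefficients of finite-dimensional $\mathfrak{g}$-modules, i.e. the restricted dual $U(\mathfrak{g})^{\circ}$, so that $\Corep(H) \simeq \Rep(\mathfrak{g})$ as tensor categories and hence $Gr(H) = Gr(\mathfrak{g})$. The augmentation ideal structure of $U(\mathfrak{g})$ should dualize to give a Hopf ideal $I \subseteq H$ whose quotient $H/I$ corresponds to the representations factoring through the trivial (or the reduced/semisimple) part. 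The key point to arrange is that the filtration is separated, i.e. $\bigcap_{n \ge 1} I^n = 0$, which reflects that $U(\mathfrak{g})$ is residually finite-dimensional in the relevant sense; the PBW theorem and the fact that the augmentation ideal of $U(\mathfrak{g})$ is residually nilpotent on finite-dimensional modules should supply this. With $I$ so chosen, $\Corep(H/I)$ is a (pro)fusion or otherwise manifestly virtually indecomposable category—one reduced so that Theorem \ref{based} (based rings) applies—and then Theorem \ref{mainhopf}, together with the hypothesis $\bigcap_{n \ge 1} p^n R = 0$ (which holds for $R = \mathbb{Z}$), upgrades virtual indecomposability from $\Corep(H/I)$ to $\Corep(H) = \Rep(\mathfrak{g})$. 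Virtual indecomposability of $Gr(\mathfrak{g})$ is exactly connectedness of $\Spec(Gr(\mathfrak{g}))$, completing the proof.

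The main obstacle I expect is the correct choice of the Hopf ideal $I$ and the verification that $\bigcap_{n\ge 1} I^n = 0$, since $\mathfrak{g}$ is an arbitrary (possibly infinite-dimensional) Lie algebra over a field of positive characteristic with no restricted structure assumed. Unlike the affine group-scheme case of Theorem \ref{maingroup}, where $G^0$ and $|G/G^0|$ organize the combinatorics, here one must identify the analogue of the infinitesimal/reduced decomposition purely Lie-theoretically. I would handle this by passing to the topological completion described in the formal-group discussion: treat $H$ with its $I$-adic topology as a topological Hopf algebra and invoke Remark \ref{lazy}, which asserts Theorem \ref{mainhopf} holds verbatim in the topological setting. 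Concretely, one takes $H$ to be the coordinate Hopf algebra of the formal group with trivial reduced part $\overline{G} = 1$ and Lie algebra $\mathfrak{g}$, so that Theorem \ref{mainformal} (with $\overline{G}=1$, forcing $|\overline{G}/\overline{G}^0| = 1$ and hence $S$ all primes $\ne p$, giving connectedness already over $\mathbb{Z}$ after the $\mathbb{Z}[S^{-1}]$-statement is sharpened to $\mathbb{Z}$ via the $\bigcap p^n \mathbb{Z}=0$ hypothesis) yields the claim. The only subtlety is ensuring that the formal group attached to $\mathfrak{g}$ has precisely $\Rep(\mathfrak{g})$ as its comodule category, which follows from the standard equivalence between finite-dimensional $\mathfrak{g}$-modules and modules over the completed enveloping algebra; granting this, Theorem \ref{lie} is an immediate specialization of Theorem \ref{mainformal}.
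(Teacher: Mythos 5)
Your overall route is the paper's own: characteristic zero via the Chevalley property (Corollary \ref{based1}), and positive characteristic by realizing $\Rep(\mathfrak{g})$ as the (topological) comodule category of the dual of $U(\mathfrak{g})$ with $I=\Ker(\epsilon)$, quotient $H/I=k$, and applying the topological Theorem \ref{mainhopf} (Remark \ref{lazy}) with $R=\mathbb{Z}$ --- equivalently, the case $\overline{G}=1$ of Theorem \ref{mainformal}, which is exactly how the paper states the deduction, with the self-contained version in Section 4.3 using $A=U(\mathfrak{g})^*$. (One small point: no ``sharpening'' of the $\mathbb{Z}[S^{-1}]$-statement is needed; since $Gr(\mathfrak{g})$ is a free $\mathbb{Z}$-module, it embeds into $\mathbb{Z}[S^{-1}]\otimes_{\mathbb{Z}}Gr(\mathfrak{g})$, so absence of nontrivial idempotents after localization already implies it over $\mathbb{Z}$.)

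However, your justification of the crucial separatedness condition $\bigcap_{n\ge 1}I^n=0$ is wrong as stated. You invoke ``the fact that the augmentation ideal of $U(\mathfrak{g})$ is residually nilpotent on finite-dimensional modules,'' but this fails badly: if $\mathfrak{g}$ is perfect (e.g.\ $\mathfrak{g}=\mathfrak{sl}_2$ in any characteristic), then $\mathfrak{g}=[\mathfrak{g},\mathfrak{g}]\subseteq (U^+)^2$, where $U^+$ is the augmentation ideal, and by induction $\mathfrak{g}\subseteq (U^+)^n$ for all $n$, so $\bigcap_n (U^+)^n\supseteq\mathfrak{g}\ne 0$; likewise $U^+M=M$ for any nontrivial simple finite-dimensional module $M$, so $U^+$ acts on it with no nilpotence whatsoever. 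The point is that the separatedness you need lives on the \emph{dual} side and has a different source: $I^n\subseteq U(\mathfrak{g})^*$ (or $U(\mathfrak{g})^\circ$) annihilates the $(n-1)$-st term of the coradical filtration of $U(\mathfrak{g})$, and since $U(\mathfrak{g})$ is a connected coalgebra (its coradical is $k1$, by the PBW coalgebra filtration) the coradical filtration is exhaustive, whence $\bigcap_n I^n=0$. This is precisely the argument the paper records in its proof for coconnected Hopf algebras (``the powers of $I$ are orthogonal to the terms of the coradical filtration''). Note also that your alternative phrasing ``reduced/semisimple part'' for $H/I$ cannot be set up in positive characteristic, since the semisimple objects do not form a tensor subcategory there; only the trivial quotient $H/I=k$, i.e.\ $\Corep(H/I)=\Vect$, works. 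With the coradical-filtration argument substituted for the residual-nilpotence claim, your proof closes and coincides with the paper's.
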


\begin{remark}
Note that the case $\overline{G}=1$ (formal groups with one closed
point) reduces to Lie algebras in characteristic zero, but in
positive characteristic it contains much more.
\end{remark}

Recall that a Hopf algebra $H$ over a field $k$ is called {\em
coconnected} if every simple $H-$comodule over $k$ is trivial (see
e.g. \cite{eg} where, in particular, coconnected Hopf algebras over
$\mathbb{C}$ are classified in Theorem 4.2). We have the following
result which extends Corollary \ref{lie}.

\begin{theorem}
Let $H$ be a coconnected Hopf algebra over any field $k$, and let
$S$ be the set of all primes not equal to the characteristic of $k$.
Then $\Rep(H)$ is virtually indecomposable over
$\mathbb{Z}[S^{-1}]$. \qed
\end{theorem}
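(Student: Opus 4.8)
The plan is to realize $\Rep(H)$ as a comodule category and then invoke Theorem \ref{mainhopf}. Since a finite-dimensional module over $H$ is the same thing as a finite-dimensional comodule over the finite dual $H^\circ$, and this correspondence is monoidal, there is a tensor equivalence $\Rep(H)\simeq \Corep(H^\circ)$ under which the trivial module goes to the trivial comodule. I would apply Theorem \ref{mainhopf} to the Hopf algebra $H^\circ$ with the Hopf ideal $I:=\mathfrak{m}$ taken to be the augmentation ideal $\mathfrak{m}=(H^\circ)^+=\Ker(\eps_{H^\circ})$, which is always a Hopf ideal. The corresponding quotient is $H^\circ/\mathfrak{m}=k$, so that $\Corep(H^\circ/\mathfrak{m})=\Vect$ has Grothendieck ring $\mathbb{Z}$ and is trivially virtually indecomposable over any coefficient ring. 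The theorem will then follow once the hypothesis $\bigcap_{n\ge 1}\mathfrak{m}^n=0$ of Theorem \ref{mainhopf} is verified.

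That verification is the crux, and it is exactly where coconnectedness enters. Let $H_{(0)}\subseteq H_{(1)}\subseteq\cdots$ be the coradical filtration of $H$; it is a coalgebra filtration with $\bigcup_m H_{(m)}=H$, and coconnectedness says precisely that $H_{(0)}=k$. Writing $I_m:=(H_{(m)})^\perp\subseteq H^\circ$ for the annihilator, one has $I_0=(k)^\perp=\mathfrak{m}$ (this is where $H_{(0)}=k$ is used) and $\bigcap_m I_m=(\bigcup_m H_{(m)})^\perp=H^\perp=0$. Dualizing the defining inclusion $\Delta H_{(m)}\subseteq\sum_{i+j=m}H_{(i)}\otimes H_{(j)}$ gives the estimate $I_aI_b\subseteq I_{a+b+1}$: for $f\in I_a$, $g\in I_b$ and $h\in H_{(a+b+1)}$, every term of $\sum f(h_{(1)})g(h_{(2)})$ lies in some $H_{(i)}\otimes H_{(j)}$ with $i+j\le a+b+1$, so either $i\le a$ or $j\le b$, and the term vanishes. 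Iterating yields $\mathfrak{m}^{\,n}=I_0^{\,n}\subseteq I_{n-1}$, whence $\bigcap_{n\ge 1}\mathfrak{m}^n\subseteq\bigcap_{n\ge1}I_{n-1}=0$, as required. The only genuine work is this short bookkeeping estimate; everything else is formal.

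Finally I would check the coefficient hypothesis of Theorem \ref{mainhopf}. If $\chara(k)=0$ there is nothing to check. If $\chara(k)=p>0$ then $p\notin S$, so $R=\mathbb{Z}[S^{-1}]=\mathbb{Z}_{(p)}$ and $\bigcap_{n\ge1}p^nR=0$. Hence Theorem \ref{mainhopf} gives that $\Corep(H^\circ)=\Rep(H)$ is virtually indecomposable over $\mathbb{Z}[S^{-1}]$, which is the claim. I expect no further obstacle: because the base quotient here is $\Vect$ rather than a larger cosemisimple Hopf algebra, one does not even need the based-ring input of Theorem \ref{based}, the same argument in fact yields strong virtual indecomposability, and it already goes through over $R=\mathbb{Z}$ (the localization only serves to match the positive-characteristic hypothesis of Theorem \ref{mainhopf}, which $\mathbb{Z}$ satisfies as well). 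Specializing to $H=U(\g)$, which is coconnected, recovers the Lie algebra case (Theorem \ref{lie}).
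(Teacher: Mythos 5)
Your proof is correct, and it is a genuine variant of the paper's argument rather than a reproduction of it. The paper dualizes to the \emph{full} dual $H^*$, observes that coconnectedness makes $H^*$ a topological Hopf algebra, complete and separated in the $I$-adic topology for $I=\Ker(\eps)$ --- separation being exactly the orthogonality of the powers of $I$ to the coradical filtration, which the paper only asserts in a parenthesis --- and then invokes the \emph{topological} version of Theorem \ref{mainhopf} (Remark \ref{lazy}). You instead realize $\Rep(H)$ as $\Corep(H^\circ)$ over the finite dual, an ordinary (discrete) Hopf algebra, so that Theorem \ref{mainhopf} applies verbatim with $I=\mathfrak{m}$ and quotient $H^\circ/\mathfrak{m}=k$; the price is that you must prove the separation $\bigcap_{n\ge 1}\mathfrak{m}^n=0$ inside $H^\circ$, which you do correctly: $H_{(0)}=k$ gives $I_0=\mathfrak{m}$, the standard inclusion $\Delta H_{(m)}\subseteq\sum_{i+j=m}H_{(i)}\otimes H_{(j)}$ gives $I_aI_b\subseteq I_{a+b+1}$, hence $\mathfrak{m}^n\subseteq I_{n-1}$ and $\bigcap_m I_m=H^\perp=0$. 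Thus both proofs rest on the same underlying fact (powers of the augmentation ideal annihilate the coradical filtration), but yours trades the topological apparatus --- completeness of $H^*$ and the only-sketched Remark \ref{lazy} --- for a short explicit filtration estimate, while the paper's two-line version is the natural companion to its parallel treatments of $U(\g)^*$ (Section 4.3) and of formal groups. Your side remarks are also sound: the proof of Theorem \ref{mainhopf} never uses centrality of the idempotent, and since $Gr(\Vect)=\mathbb{Z}$ has no nontrivial idempotents at all in $R$, your argument does yield \emph{strong} virtual indecomposability; and $R=\mathbb{Z}$ satisfies the hypotheses as well, though note that the conclusion over $\mathbb{Z}[S^{-1}]$ is the \emph{stronger} statement (idempotents can only appear after inverting primes), so the localization is not mere bookkeeping --- rather, $p$ must be excluded from $S$ precisely because $\bigcap_{n\ge 1}p^nR=0$ fails in any ring where $p$ is invertible.
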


\begin{proof}
If $H$ is coconnected then $H^*$ is a topological Hopf algebra with
maximal ideal $I:=Ker(\epsilon)$, which is complete and separated in
the topology defined by $I$ (as the powers of $I$ are orthogonal to
the terms of the coradical filtration of $H$). So the claim follows
from the topological version of Theorem \ref{mainhopf} (see Remark
\ref{lazy}).
\end{proof}

\subsection{The supergroup case.} In Section 6.1 we recall the
notion of a Hopf superalgebra, and in Section 6.2 we recall the
notions of an affine supergroup scheme and a formal supergroup over
$k$. We then generalize in Section 6.3 the results from Section 5 to
the super-case (assuming the characteristic of $k\ne 2$).

Let $\mathcal{G}$ be an affine supergroup scheme or, more generally,
a formal supergroup, and let $u\in \mathcal{G}$ be an element of
order $2$ acting by parity on the algebra of regular functions
$\mathcal{O}(\mathcal{G})$. Let $\Rep(\mathcal{G},u)$ be the
category of representations of $\mathcal{G}$ on finite-dimensional
supervector spaces over $k$ on which $u$ acts by parity, and let
$Gr(\mathcal{G},u)$ be its Grothendieck ring.

\begin{theorem}\label{mainformalsuper}
Let $k$ be any field of characteristic $\ne 2$. Let $\mathcal{G}$ be
an affine supergroup scheme over $k$ or, more generally, a formal
supergroup over $k$. Let $S$ be the set of all primes $\ne 2$ not
equal to the characteristic of $k$ and not dividing
$|\mathcal{G}/\mathcal{G}^0|$. Then $\Spec(\mathbb{Z}[S^{-1}]\otimes
_\mathbb{Z} Gr(\mathcal{G},u))$ is connected.
\end{theorem}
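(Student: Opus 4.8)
The plan is to reduce the super statement to the purely even statement already established in Theorems~\ref{maingroup} and~\ref{mainformal}, by killing the odd directions of $\mathcal{G}$ and then climbing back up with the super-analogue of Theorem~\ref{mainhopf}. Write $\mathcal{O}=\mathcal{O}(\mathcal{G})$ for the (topological) Hopf superalgebra of functions, graded by the parity that $u$ implements, and let $I\subseteq\mathcal{O}$ be the ideal generated by the odd part $\mathcal{O}_{\bar 1}$. Since $k$ has characteristic $\ne 2$, every odd element of the supercommutative algebra $\mathcal{O}$ squares to zero, so $I$ is a nil ideal; because $\Delta(\mathcal{O}_{\bar 1})\subseteq \mathcal{O}_{\bar 1}\otimes\mathcal{O}_{\bar 0}+\mathcal{O}_{\bar 0}\otimes\mathcal{O}_{\bar 1}$ and both the counit and the antipode preserve parity, $I$ is in fact a Hopf ideal, and the quotient $\mathcal{O}/I$ is a purely even commutative Hopf algebra $\mathcal{O}(G_{\mathrm{ev}})$, the functions on an ordinary affine group scheme (respectively formal group) $G_{\mathrm{ev}}$, into which $u$ maps as a central element of order $2$. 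The first thing I would check carefully is that $\bigcap_{n\ge 1}I^{n}=0$: in the affine supergroup case the odd part is finitely generated and $I$ is nilpotent, so this is automatic, while in the formal case it is built into the separatedness of $\mathcal{O}$ and one must pass to the topological version of Theorem~\ref{mainhopf} (Remark~\ref{lazy}).

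Next I would identify the Grothendieck ring of the quotient category. In $\Rep(G_{\mathrm{ev}},u)$ the super-grading of any object is pinned down by the eigenspaces of the central order-two element $u$, and the underlying $G_{\mathrm{ev}}$-module of a tensor product $V\otimes W$ is the ordinary tensor product; the super sign enters only the symmetry constraint, which the Grothendieck ring does not see. Hence $Gr(\Rep(G_{\mathrm{ev}},u))\cong Gr(G_{\mathrm{ev}})$ as rings. Now $G_{\mathrm{ev}}$ is an ordinary affine group scheme (or formal group) whose component group coincides with that of $\mathcal{G}$, since the odd directions are infinitesimal and hence contained in the identity component; thus $|G_{\mathrm{ev}}/G_{\mathrm{ev}}^{0}|=|\mathcal{G}/\mathcal{G}^{0}|$. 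Theorem~\ref{maingroup} (or~\ref{mainformal}) then gives that $\Spec(\mathbb{Z}[(S')^{-1}]\otimes_{\mathbb{Z}}Gr(G_{\mathrm{ev}}))$ is connected, where $S'$ is the set of primes different from the characteristic of $k$ and not dividing $|\mathcal{G}/\mathcal{G}^{0}|$. Since our $S$ is obtained from $S'$ by additionally discarding the prime $2$, we have $S\subseteq S'$, and because $Gr(G_{\mathrm{ev}})$ is free over $\mathbb{Z}$ the ring $\mathbb{Z}[S^{-1}]\otimes Gr(G_{\mathrm{ev}})$ embeds in $\mathbb{Z}[(S')^{-1}]\otimes Gr(G_{\mathrm{ev}})$; a nontrivial idempotent in the smaller ring would be one in the larger, so the smaller ring has none either. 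In other words $\Rep(G_{\mathrm{ev}},u)$ is virtually indecomposable over $R:=\mathbb{Z}[S^{-1}]$.

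Finally I would apply the super-analogue of Theorem~\ref{mainhopf} to the Hopf ideal $I\subseteq\mathcal{O}$, whose proof is identical to the even one because it only manipulates the lifting of idempotents along the $I$-adic filtration and never uses the symmetry of the braiding. Its hypothesis $\bigcap_{n\ge1}p^{n}R=0$ for $p$ the characteristic of $k$ (when positive) holds since $p\notin S$, so $p$ is a non-unit in the domain $\mathbb{Z}[S^{-1}]\subseteq\mathbb{Q}$; in characteristic zero the hypothesis is vacuous. The conclusion is that virtual indecomposability of $\Corep(\mathcal{O}/I,u)=\Rep(G_{\mathrm{ev}},u)$ over $R$ propagates to $\Corep(\mathcal{O},u)=\Rep(\mathcal{G},u)$ over $R$, which is exactly the assertion that $\Spec(\mathbb{Z}[S^{-1}]\otimes_{\mathbb{Z}}Gr(\mathcal{G},u))$ is connected. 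I expect the genuine obstacle to lie in the bookkeeping of the two completed tensor products and the verification of $\bigcap_{n}I^{n}=0$ together with the super-comodule form of Theorem~\ref{mainhopf} in the formal (topological) setting; meanwhile the exclusion of the prime $2$ from $S$ is precisely what prevents the parity-induced $\mathbb{Z}/2$ (as in $\SuperVect$, where $Gr=\mathbb{Z}[x]/(x^2-1)$) from acquiring the idempotents $\frac{1\pm x}{2}$, and it is automatically compatible with the component-group condition used above.
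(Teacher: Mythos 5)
Your overall architecture coincides with the paper's: quotient $\mathcal{O}(\mathcal{G})$ by the Hopf ideal generated by the odd part, apply the even-case Theorems \ref{maingroup}/\ref{mainformal} to the quotient, and lift idempotents along the $I$-adic filtration via Theorem \ref{mainhopf}. But the step you dismiss as routine --- ``the super-analogue of Theorem \ref{mainhopf}, whose proof is identical to the even one because it only manipulates the lifting of idempotents and never uses the symmetry of the braiding'' --- is precisely where the genuine difficulty sits, and as written your argument has a gap there. The proof of Theorem \ref{mainhopf} is not bare idempotent-lifting: it runs through the character embedding $ch: k\otimes_{\mathbb{Z}}Gr(\C)\hookrightarrow H$ of Proposition \ref{injective}, and that injectivity \emph{fails} verbatim for Hopf superalgebras, because the ordinary trace character cannot separate a simple super-comodule from its parity shift. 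Already for $\mathcal{H}=k$ one has $\Corep(\mathcal{H})=\SuperVect$ with $Gr=\mathbb{Z}[x]/(x^2-1)$, yet $ch(k_0)=ch(k_1)=1$. This is exactly why the paper interposes Theorem \ref{supord}: it replaces $\mathcal{H}=\mathcal{O}(\mathcal{G})$ by the \emph{ordinary} Hopf algebra $\mathcal{H}'=k[\mathbb{Z}_2]\ltimes\mathcal{H}$ of \cite{aeg}, whose group-like $g$ records the parity, so that $\Corep(\mathcal{H})\simeq\Corep(\mathcal{H}')$ as tensor categories, the character map lands injectively in $\mathcal{H}'$, and the even Theorem \ref{mainhopf} applies as proved. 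The even quotient is then $\mathcal{H}'/(k[\mathbb{Z}_2]\ltimes J(\mathcal{H}))$, representing $\mathbb{Z}_2\times G$; its component group acquires a factor of $2$, and that is where the blanket exclusion of the prime $2$ from $S$ enters the proof structurally --- not merely through the $\SuperVect$ idempotent you mention at the end.

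To be fair, your restriction to $\Rep(\mathcal{G},u)$, where $u$ pins the parity, does exclude the parity-shift pathology (on $\Pi M$ with the same coaction, $u$ no longer acts by parity), so a direct super character argument might be salvageable --- but then you would have to redo Proposition \ref{injective} in the graded setting (graded density, the possibility of odd endomorphisms of graded simples, and the claim that distinct simples of $\Rep(\mathcal{G},u)$ have independent ordinary characters), none of which you supply; in effect you would be re-deriving the bosonization step you skipped. Two smaller points: your justification that $\bigcap_{n}I^{n}=0$ in the affine case because ``the odd part is finitely generated and $I$ is nilpotent'' is unwarranted, since $\mathcal{O}(\mathcal{G})$ is \emph{not} assumed finitely generated for an affine supergroup scheme --- one should instead pass to the inductive limit of finitely generated Hopf subsuperalgebras, as in Section 5.4, or argue via nilpotence of odd elements more carefully. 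On the positive side, your identification $Gr(\Rep(G_{\mathrm{ev}},u))\cong Gr(G_{\mathrm{ev}})$ via Schur's lemma for the central order-$2$ element $u$ is correct and clean, and would indeed avoid the auxiliary $\mathbb{Z}_2$ of the paper's quotient; but it is exactly the portion of the argument that leans on the unproven super version of Theorem \ref{mainhopf}, so it cannot stand on its own.
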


\begin{remark}
Note that the prime $2$ must be excluded (i.e., cannot be inverted).
Indeed, already in the category $\SuperVect$ of finite-dimensional
supervector spaces over $k$ (see Section 6), the element
$\frac{1}{2}(k_0\oplus k_1)$ is a nontrivial idempotent.
\end{remark}

Recall that a {\em Lie superalgebra} over a field $k$ is a Lie
algebra in $\SuperVect$ (see e.g, \cite{b}). In other words, a Lie
superalgebra $\mathfrak{g}=\mathfrak{g}_0\oplus \mathfrak{g}_1$ is a
supervector space over $k$, equipped with an operation
$[\,,\,]:\mathfrak{g}\otimes \mathfrak{g}\to \mathfrak{g}$
satisfying the following axioms: $[x,y] = -(-1)^{|x||y|}[y,x]$ and
$[x,[y,z]]=[[x,y],z] + (-1)^{|x||y|}[y,[x,z]]$, for homogeneous
elements $x,y\in \mathfrak{g}$ and $z\in \mathfrak{g}$. The
following result on Lie superalgebras is an immediate corollary of
Theorem \ref{mainformalsuper}.

\begin{corollary}\label{superlie}
Let $\mathfrak{g}$ be a Lie superalgebra over a field $k$ of
characteristic $\ne 2$. Let $S$ be the set of all primes $\ne 2$ not
equal to the characteristic of $k$. Then
$\Spec(\mathbb{Z}[S^{-1}]\otimes _\mathbb{Z} Gr(\mathfrak{g}))$ is
connected. \qed
\end{corollary}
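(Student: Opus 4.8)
The plan is to realize $\Rep(\mathfrak{g})$ as a category of the form $\Rep(\mathcal{G},u)$ for a suitable formal supergroup $\mathcal{G}$ and parity element $u$, and then invoke Theorem \ref{mainformalsuper}. First I would form the universal enveloping superalgebra $U(\mathfrak{g})$, a cocommutative Hopf superalgebra, and incorporate the parity: let $\langle u\rangle\cong\mathbb{Z}/2$ act on $U(\mathfrak{g})$ by the parity automorphism $(-1)^{|\cdot|}$, and set $\widetilde{U}:=U(\mathfrak{g})\rtimes k[u]$. This $\widetilde{U}$ is again a cocommutative Hopf superalgebra, and its category of finite-dimensional modules on which $u$ acts by the parity involution of the underlying supervector space is canonically equivalent, as a \emph{symmetric tensor} category, to $\Rep(\mathfrak{g})$ (the super symmetric braiding being governed precisely by $u$).

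Next I would pass to the dual picture. Taking the continuous (restricted) dual of $\widetilde{U}$ produces a complete local commutative Hopf superalgebra $\mathcal{O}(\mathcal{G})$, defining a formal supergroup $\mathcal{G}$ together with the order-$2$ element $u\in\mathcal{G}$ acting by parity on $\mathcal{O}(\mathcal{G})$. By construction $\widetilde{U}$ is the distribution algebra of $\mathcal{G}$, so finite-dimensional $\mathcal{O}(\mathcal{G})$-comodules coincide with finite-dimensional $\widetilde{U}$-modules (using that $\mathfrak{g}$ is finite-dimensional, every such module is continuous). Restricting to those on which $u$ acts by parity gives a symmetric tensor equivalence $\Rep(\mathcal{G},u)\simeq\Rep(\mathfrak{g})$, hence an isomorphism of Grothendieck rings $Gr(\mathcal{G},u)\cong Gr(\mathfrak{g})$.

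It then remains to identify the component group. Since $U(\mathfrak{g})$ is coconnected (its associated graded under the PBW filtration is a super(poly/exterior) algebra, so its coradical is $k$ and the corresponding formal supergroup is infinitesimal), the only discrete part of $\mathcal{G}$ is the adjoined $\langle u\rangle$. Thus the reduced part of $\mathcal{G}$ is $\mathbb{Z}/2$ and $\mathcal{G}/\mathcal{G}^0\cong\mathbb{Z}/2$, so $|\mathcal{G}/\mathcal{G}^0|=2$. The set $S$ appearing in Theorem \ref{mainformalsuper}, namely the primes $\ne 2$, $\ne\mathrm{char}(k)$, and not dividing $|\mathcal{G}/\mathcal{G}^0|=2$, therefore equals exactly the set $S$ of the corollary, all primes $\ne 2$ and $\ne\mathrm{char}(k)$. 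Applying Theorem \ref{mainformalsuper} and transporting along the ring isomorphism above yields that $\Spec(\mathbb{Z}[S^{-1}]\otimes_{\mathbb{Z}}Gr(\mathfrak{g}))$ is connected.

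The main obstacle I anticipate lies in the first two steps: constructing $\mathcal{G}$ so that the parity element $u$ matches the super symmetric braiding of $\Rep(\mathfrak{g})$, and verifying that $\Rep(\mathcal{G},u)\simeq\Rep(\mathfrak{g})$ is an equivalence of \emph{symmetric tensor} categories (not merely of abelian categories), so that the Grothendieck rings agree as rings rather than just as groups. In positive characteristic one must also check carefully that $U(\mathfrak{g})$ is genuinely infinitesimal with trivial reduced part, so that $\mathcal{G}/\mathcal{G}^0$ is exactly $\mathbb{Z}/2$ and no primes beyond $2$ and $\mathrm{char}(k)$ must be inverted.
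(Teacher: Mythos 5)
Your proposal is correct and takes essentially the same route as the paper, which presents Corollary \ref{superlie} as an immediate consequence of Theorem \ref{mainformalsuper} by realizing $\Rep(\mathfrak{g})$ as $\Rep(\mathcal{G},u)$ for the formal supergroup dual to $k[\mathbb{Z}_2]\ltimes U(\mathfrak{g})$ --- the super analogue of how the paper deduces Theorem \ref{lie} from Theorem \ref{mainformal} using the coconnectedness of $U(\mathfrak{g})$, exactly the construction you carry out. Your parenthetical appeal to finite-dimensionality of $\mathfrak{g}$ is unnecessary (the corollary assumes no such restriction): for arbitrary $\mathfrak{g}$ one uses the full topological dual, which is complete and separated in the $\Ker(\epsilon)$-adic topology since the powers of the augmentation ideal are orthogonal to the terms of the coradical filtration, so the topological version of Theorem \ref{mainhopf} (Remark \ref{lazy}) applies verbatim.
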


By a theorem of Deligne \cite{d} in characteristic zero, the
categories $\Rep(\mathcal{G},u)$ exhaust all $k-$linear abelian
symmetric rigid tensor categories of exponential growth. Hence, we
deduce the following corollary.

\begin{corollary}\label{maintanak}
If $\C$ is a $k-$linear abelian symmetric rigid tensor category of
exponential growth over an algebraically closed field $k$ of
characteristic zero, then $\C$ is virtually indecomposable. \qed
\end{corollary}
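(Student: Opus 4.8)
The plan is to reduce the statement to Theorem \ref{mainformalsuper} via Deligne's classification and then transfer the resulting connectedness statement from $\mathbb{Z}[S^{-1}]$-coefficients down to $\mathbb{Z}$-coefficients. First I would invoke the theorem of Deligne \cite{d}: since $\C$ is a $k$-linear abelian symmetric rigid tensor category of exponential growth over an algebraically closed field $k$ of characteristic zero, there is an affine supergroup scheme (or, more generally, a formal supergroup) $\mathcal{G}$ over $k$ together with an element $u\in\mathcal{G}$ of order $2$ acting by parity, such that $\C$ is equivalent as a symmetric tensor category to $\Rep(\mathcal{G},u)$; in particular $Gr(\C)\cong Gr(\mathcal{G},u)$ as rings. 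Because $\C$ is symmetric, its Grothendieck ring is commutative, so here ``virtually indecomposable'' coincides with ``strongly virtually indecomposable'': the absence of nontrivial central idempotents is the same as the absence of nontrivial idempotents, equivalently the connectedness of $\Spec(Gr(\C))$.

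Next I would apply Theorem \ref{mainformalsuper} to $\mathcal{G}$. With $S$ the set of primes $\ne 2$, not equal to $\chara(k)=0$, and not dividing $|\mathcal{G}/\mathcal{G}^0|$, that theorem gives that $\Spec(\mathbb{Z}[S^{-1}]\otimes_\mathbb{Z} Gr(\mathcal{G},u))$ is connected, i.e.\ $\mathbb{Z}[S^{-1}]\otimes_\mathbb{Z} Gr(\C)$ has no nontrivial idempotents.

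Finally I would descend to $\mathbb{Z}$-coefficients. The localization $\mathbb{Z}\to\mathbb{Z}[S^{-1}]$ is flat, and $Gr(\C)$ is a free $\mathbb{Z}$-module with basis the isomorphism classes of simple objects, so the canonical map $Gr(\C)\to\mathbb{Z}[S^{-1}]\otimes_\mathbb{Z} Gr(\C)$ is an injective unital ring homomorphism. Any idempotent $e\in Gr(\C)$ therefore maps to an idempotent of $\mathbb{Z}[S^{-1}]\otimes_\mathbb{Z} Gr(\C)$, which by the previous step must be $0$ or $1$; by injectivity $e\in\{0,1\}$. Hence $Gr(\C)$ has no nontrivial idempotents and $\C$ is virtually indecomposable.

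The only genuinely deep ingredient is Deligne's classification, which I am treating as a black box; once $\C$ is identified with some $\Rep(\mathcal{G},u)$ the argument is formal. The one point requiring a little care, and the main (mild) obstacle, is the descent in the last step: it is essential that we need connectedness only after inverting the primes in $S$ yet conclude indecomposability over $\mathbb{Z}$ itself. This works precisely because $\mathbb{Z}\to\mathbb{Z}[S^{-1}]$ is an injective localization and $Gr(\C)$ is torsion-free, so no idempotent can be created or destroyed under localization; in particular the precise composition of $S$ (notably the impossibility of inverting $2$) is irrelevant to this formal implication.
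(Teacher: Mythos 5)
Your proposal is correct and follows exactly the paper's route: the paper deduces the corollary immediately from Deligne's classification of symmetric rigid tensor categories of exponential growth as categories $\Rep(\mathcal{G},u)$ together with Theorem \ref{mainformalsuper}. The only thing you add is an explicit (and correct) justification of the descent from $\mathbb{Z}[S^{-1}]$-coefficients to $\mathbb{Z}$-coefficients via injectivity of the localization map on the torsion-free ring $Gr(\C)$, a step the paper leaves implicit.
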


Recall that a {\em (super)Tannakian} category over a field $k$ is a
$k-$linear abelian symmetric rigid tensor category $\C$, with
$\End({\bf 1})=k$, where ${\bf 1}$ denotes the unit object, which
admits a fiber functor to the category of finite-dimensional
(super)vector spaces (see \cite{d}). In the following proposition we
deduce a positive answer to the first part of Serre's Question
\ref{serreq}.

\begin{proposition} \label{tanak}
A (super)Tannakian category $\C$ over any field $k$ is virtually
indecomposable.
\end{proposition}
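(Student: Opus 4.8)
The plan is to reduce the statement to the affine (super)group-scheme cases already treated, and then to descend from a localized Grothendieck ring back to $Gr(\C)$ itself. First I would apply Tannakian reconstruction to $\C$. In the even case, where $\C$ admits a fiber functor to finite-dimensional $k$-vector spaces, the theorem of Deligne--Milne \cite{dm} gives an equivalence $\C \simeq \Rep(G)$ for an affine group scheme $G$ over $k$. In the super case (characteristic $\ne 2$), where the fiber functor lands in $\SuperVect$, Deligne's theorem \cite{d} gives $\C \simeq \Rep(\mathcal{G},u)$ for an affine supergroup scheme $\mathcal{G}$ with its parity element $u$. Either way we obtain a ring isomorphism of $Gr(\C)$ with $Gr(G)$, respectively $Gr(\mathcal{G},u)$, and it suffices to show the latter has no nontrivial central idempotents.

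Now let $S$ be the set of primes singled out in the relevant main result (those not equal to $\chara k$ and not dividing the order of the component group, with $2$ additionally excluded in the super case). Theorem \ref{maingroup} in the even case, and Theorem \ref{mainformalsuper} in the super case, assert that $\Spec(\mathbb{Z}[S^{-1}] \otimes_{\mathbb{Z}} Gr(\C))$ is connected, equivalently that the localized ring $\mathbb{Z}[S^{-1}] \otimes_{\mathbb{Z}} Gr(\C)$ has no nontrivial idempotents. To conclude over $\mathbb{Z}$, I would use that $Gr(\C)$ is free as a $\mathbb{Z}$-module on the classes of simple objects, hence torsion-free, so the localization map $Gr(\C) \to \mathbb{Z}[S^{-1}] \otimes_{\mathbb{Z}} Gr(\C)$ is injective. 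An injective ring homomorphism sends idempotents to idempotents, so every idempotent of $Gr(\C)$ maps to an idempotent of the localization; the latter being $0$ or $1$, injectivity forces the original to be $0$ or $1$. Since $\C$ is symmetric, $Gr(\C)$ is commutative, so its central idempotents are exactly its idempotents, and we conclude that $\C$ is virtually indecomposable.

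I expect the only genuinely delicate point to be this last descent. At first glance inverting the primes in $S$ appears to weaken the conclusion, since Theorems \ref{maingroup} and \ref{mainformalsuper} do not directly address $Gr(\C)$ over $\mathbb{Z}$; the resolution is that localization only enlarges the ring, so triviality of idempotents after inverting $S$ is the stronger property and propagates down the injection. The reconstruction inputs themselves are simply quoted. Two routine points remain: one may first pass to $k$ algebraically closed using Corollary \ref{algcl}, since base change preserves the (super)Tannakian property; and in characteristic $2$ only the even case arises, so the exclusion of $2$ in Theorem \ref{mainformalsuper} causes no loss.
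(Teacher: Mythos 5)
Your proposal is correct and follows essentially the same route as the paper: invoke Deligne--Milne (even case) or Deligne (super case) reconstruction to identify $\C$ with $\Rep(G)$ or $\Rep(\mathcal{G},u)$, then quote Theorem \ref{maingroup}/\ref{mainformalsuper}. Your explicit descent step---that connectedness after inverting $S$ is the \emph{stronger} statement, propagating to $Gr(\C)$ via the injection of the torsion-free ring $Gr(\C)$ into its localization---is a correct spelling-out of what the paper leaves implicit, as are your remarks on base change to $\overline{k}$ and on characteristic $2$.
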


\begin{proof}
By (the super analog of) a theorem of Deligne-Milne \cite {dm}
(which is in \cite{d}), $\C$ is equivalent to a category of the form
$\Rep(\mathcal{G},u)$, so the claim follows by Theorem
\ref{mainformalsuper}.
\end{proof}

\section{The virtually indecomposability of a unital based ring}

In characteristic zero there is an alternative (``combinatorial")
proof of (a slight generalization of) Theorem \ref{serre} in the
framework of unital based rings.

\subsection{Based rings} Let $A$ be a ring with a
distinguished $\mathbb{Z}-$basis $\{b_i\}$, $i\in I$, (not
necessarily of finite rank), which contains the unit element $1$,
such that $b_i b_j = \sum_{k} n_{ij}^k b_k$, where $n_{ij}^k\in
\mathbb{Z}^+$. The bilinear map $(\sum_{i} n_{i} b_i,\sum_{i} m_{i}
b_i)\mapsto \sum_{i}n_i m_i$ defines a positive inner product
$(\,,\,):A\times A\to \mathbb{Z}$ on $A$. We call $A$ a {\em unital
based ring} if there is an involution $i\mapsto i^*$ such that the
induced map $x = \sum_{i} n_{i} b_i \mapsto x^* := \sum_{i} n_{i}
b_{i^*}$ satisfies $(xy,z)=(x, zy^*)=(y,x^*z)$ for all $x,y,z\in A$.
In particular, it follows that the matrix of multiplication by $x^*$
is transposed to the matrix of multiplication by $x$, for any $x\in
A$.

\begin{example}\label{ex}
If $\C$ is a $k-$linear semisimple rigid tensor category, its
Grothendieck ring $Gr(\C)$ is a unital based ring. A typical example
of such category is the category $\C:=\Corep(H)$ of
finite-dimensional comodules of a cosemisimple Hopf algebra $H$. The
distinguished $\mathbb{Z}-$basis of $Gr(\C)$ consists of the
isomorphism classes of simple $H-$comodules, and the involution $*$
is given by taking the $k-$linear dual of a comodule.
\end{example}

\subsection{The proof of Theorem \ref{based}.}
Let $e\ne 1$ be a central idempotent in $A$. We have to show that
$e=0$. We first note that $e$ is a projection operator on an inner
product space, which is normal (i.e $ee^*=e^*e$), so $e$ is
self-adjoint. Indeed,
$(e(1-e^*),e(1-e^*))=(e^*e(1-e^*),1-e^*)=(ee^*(1-e^*),1-e^*)=0$.
Thus by positivity of the inner product, $e(1-e^*)=0$, so $e=ee^*$,
hence $e=e^*$.

Then $e$ is an orthogonal projector to a proper subspace of
$\mathbb{R}\otimes_\mathbb{Z} A$, which does not contain $1$. So
$0\le (e,e)=(e1,e1)<(1,1)=1$. But $(e,e)$ is an integer, so
$(e,e)=0$, and hence $e=0$. \qed

\begin{remark} It is interesting to mention here a
classical result of Kaplansky which asserts that there is no
nontrivial idempotent in the integral group ring of any (not
necessarily commutative) group (see \cite{k}, \cite{p}), i.e., the
integral group ring of any group is strongly virtually
indecomposable. Equivalently, the tensor category $Vec_G$ of
$G-$graded vector spaces over $k$ is strongly virtually
indecomposable for any group $G$.

In fact, Proposition 3 in \cite{r} extends the result of Kaplansky
to fusion rings (= unital based rings of finite rank). Equivalently,
any fusion category is strongly virtually indecomposable.
\end{remark}

\section{The proof of Theorem \ref{mainhopf}.}

In this section we let $H$ be a Hopf algebra (not necessarily
commutative) over $k$, and  $\mathcal{C}:=\Corep(H)$ be the category
of finite-dimensional right comodules of $H$. Then $\C$ is a
$k-$linear abelian rigid tensor category in which every object has a
finite length. Let $Gr(\mathcal{C})$ be the Grothendieck ring of
$\mathcal{C}$; it is the free $\mathbb{Z}-$algebra with a
distinguished basis formed by the classes $[X]$ of the simple
objects $X\in \mathcal{C}$.

\subsection{Characters in Hopf algebras.}
Recall that any $M\in \C$ has a canonical rational $H^*-$module
structure.

\begin{definition}
For an object $M\in\mathcal{C}$, the character $ch(M)$ of $M$ is the
character of the $H^*-$module $M$. In other words, the character
$ch(M)$ is the function $H^*\to k$ defined by
$ch(M)(x):=tr(x_{|M})$.
\end{definition}
Clearly, $ch(M)\in H$, $ch(M)ch(N)=ch(M\otimes N)$ and
$ch(M)+ch(N)=ch(M+N)$. Moreover, if $M_1,\dots,M_n$ are the distinct
composition factors of $M$, with multiplicities $a_1,\dots,a_n$,
then $ch(M)=\sum_{i=1}^n a_ich(M_i)$. In other words, the character
of $M$ and the character of its semisimplification $\oplus_{i=1}^n
a_i M_i$ coincide. We therefore have a well defined $k-$algebra
homomorphism
$$ch:k\otimes_{\mathbb{Z}}Gr(\mathcal{C})\to
H,\,\,a\otimes [M]\mapsto a\cdot ch(M).$$

\begin{proposition}\label{injective}
The character map $ch$ is injective. In other words, if
$M,N\in\mathcal{C}$ with $ch(M)=ch(N)$, then $[M]=[N]$ in
$k\otimes_{\mathbb{Z}}Gr(\mathcal{C})$.
\end{proposition}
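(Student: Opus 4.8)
The plan is to reduce the assertion to the $k$-linear independence of the characters of the distinct simple comodules, and then to deduce that independence from the structure of the coradical of $H$. Since $ch(M)$ depends only on the composition factors of $M$ (as recorded just before the statement, $ch(M)=\sum_i a_i\, ch(M_i)$), and since $k\otimes_{\mathbb{Z}}Gr(\mathcal{C})$ has the classes $[M_\alpha]$ of the simple comodules as a $k$-basis, the map $ch$ is injective if and only if the elements $ch(M_\alpha)\in H$ are linearly independent over $k$. Thus the entire problem reduces to proving this independence.

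To this end, I would use the fact that each simple right $H$-comodule $M_\alpha$ has a coefficient subcoalgebra $C_\alpha\subseteq H$, that $C_\alpha$ is a simple subcoalgebra, and that the distinct $C_\alpha$ together constitute the coradical $H_0=\bigoplus_\alpha C_\alpha$ as a direct sum. Over the algebraically closed field $k$ each $C_\alpha$ is a comatrix coalgebra $M_{n_\alpha}^c(k)$ with standard basis $\{x_{ij}^\alpha\}$, and in a suitable basis $\{v_i\}$ of $M_\alpha$ the coaction reads $\rho(v_i)=\sum_j v_j\otimes x_{ji}^\alpha$; hence $ch(M_\alpha)=\sum_i x_{ii}^\alpha$ lies in $C_\alpha$. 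Independence is then immediate: in a relation $\sum_\alpha c_\alpha\, ch(M_\alpha)=0$ each summand $c_\alpha\, ch(M_\alpha)$ lies in $C_\alpha$, so the directness of the sum $\bigoplus_\alpha C_\alpha$ forces $c_\alpha\, ch(M_\alpha)=0$ for every $\alpha$, and since $ch(M_\alpha)\neq 0$ we conclude $c_\alpha=0$.

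The one step that requires genuine care is the nonvanishing of $ch(M_\alpha)$ in positive characteristic, and I expect this to be the main (if small) obstacle. It is tempting to detect it through the counit, but $\varepsilon(ch(M_\alpha))=n_\alpha\cdot 1_k=\dim(M_\alpha)\cdot 1_k$ can vanish when $p\mid n_\alpha$ --- precisely the phenomenon that makes ordinary character theory delicate in characteristic $p$. The robust remedy is to observe that $ch(M_\alpha)=\sum_i x_{ii}^\alpha$ is a sum of \emph{distinct} members of the basis $\{x_{ij}^\alpha\}$ of $C_\alpha$, hence nonzero in every characteristic. This keeps the whole argument characteristic-free and is what lets the proposition hold over an arbitrary field $k$.
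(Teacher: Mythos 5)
Your proof is correct, but it is genuinely a dual repackaging of the paper's argument rather than the same proof. Both start with the identical reduction: since $ch$ factors through semisimplification and the classes of simples form a $k$-basis of $k\otimes_{\mathbb{Z}}Gr(\mathcal{C})$, injectivity amounts to the $k$-linear independence of the characters of pairwise non-isomorphic simple comodules. The paper then works in $H^*$: by the Jacobson density theorem the map $H^*\to\oplus_i\End_k(M_i)$ is surjective (Schur's lemma over the algebraically closed $k$ enters here), so one picks $x\in H^*$ vanishing on $\End_k(M_j)$ for $j\ne i$ and having trace $1$ on $M_i$, giving $a_i=0$ in one stroke, uniformly in all characteristics. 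You instead work inside $H$: the characters land in the coradical $H_0=\bigoplus_\alpha C_\alpha$, each $C_\alpha$ is a comatrix coalgebra $M^c_{n_\alpha}(k)$, and $ch(M_\alpha)=\sum_i x^\alpha_{ii}$ is a sum of distinct basis vectors of $C_\alpha$, hence nonzero even in characteristic $p$. Note that the two mechanisms are literally dual: the linear independence of the matrix coefficients $x^\alpha_{ij}$ of a simple comodule (equivalently, the comatrix form of $C_\alpha$) is exactly the dual statement of the density surjectivity $H^*\twoheadrightarrow\End_k(M_\alpha)$, so your route is not more general --- both need $k$ algebraically closed, which is the paper's standing convention from Section 2. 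What your version buys is structural information: it locates the characters of semisimple objects inside $H_0$, and it isolates precisely why the naive counit test fails in characteristic $p$ (one may have $p\mid\dim M_\alpha$, so $\varepsilon(ch(M_\alpha))=0$) --- the same subtlety the paper sidesteps by choosing a trace-one element rather than using $\varepsilon$. What it costs is reliance on heavier (though standard) coalgebra facts: that the coefficient coalgebra of a simple comodule is a simple subcoalgebra, and that the sum of the distinct simple subcoalgebras of $H$ is direct; the paper's density argument gets by without any of this.
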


\begin{proof}
It is enough to show that if $\sum_{i=1}^{m} a_ich(M_i)=0$ on $H^*$,
for some finite number of non-isomorphic irreducible comodules
$M_i\in \C$ and some elements $a_i\in k$, then $a_i=0$ for all $i$.

Indeed, by the density theorem, the map $H^*\to \oplus_i\End_k(M_i)$
is surjective, so we can choose an element $x\in H^*$ which maps to
$0$ on $\End_k(M_j)$ for $j\ne i$, and to an element with trace $1$
on $\End_k(M_i)$, which implies that $a_i=0$ for all $i$.
\end{proof}

\begin{remark}
Note that if the characteristic of $k$ is zero then Proposition
\ref{injective} implies that the character of $M$ determines the
composition factors of $M$ together with their multiplicities, i.e.,
$ch:Gr(\mathcal{C})\to H$ is injective (so in particular, if $M$,
$N$ are semisimple then $M$, $N$ are isomorphic). On the other hand,
if the characteristic of $k$ is $p>0$ then Proposition
\ref{injective} implies only that the character of $M$ determines
the composition factors of $M$ {\em together with their
multiplicities modulo $p$}.
\end{remark}

\subsection{The proof of Theorem \ref{mainhopf}.}
Set $\overline{\C}:=\Corep(H/I)$. The surjection of Hopf algebras
$H\twoheadrightarrow H/I$ induces a tensor functor $\C\to
\overline{\C}$, which in turn induces a ring homomorphism
$R\otimes_\mathbb{Z} Gr(\C)\to R\otimes_\mathbb{Z}
Gr(\overline{\C})$. Suppose $E\in R\otimes_\mathbb{Z} Gr(\C)$ is an
idempotent which is not $0$ or $1$, and let $e$ be the image of $E$
in $R\otimes_\mathbb{Z} Gr(\overline{\C})$. By assumption, $e$ is
either $0$ or $1$. Without loss of generality we may assume that
$e=0$, replacing $E$ by $1-E$ if needed.

Now, if $k$ has characteristic $p>0$, at least one of the
coefficients of $E$ is not divisible by $p$. Indeed, if $E=pF$ then
$E^n=p^nF^n=E$, so $E\in p^nR\otimes_\mathbb{Z} Gr(\C)$ for all $n$,
and hence it is zero, which is a contradiction. Therefore, the image
$E'$ of $E$ in $R\otimes_\mathbb{Z} k\otimes_{\mathbb{Z}} Gr(\C)$
(which is $(R/pR)\otimes_{\mathbb{F}_p} k\otimes_{\mathbb{Z}}
Gr(\C)$ in positive characteristic) is nonzero, and the image $e'$
of $e$ in $R\otimes_\mathbb{Z} k\otimes_{\mathbb{Z}}
Gr(\overline{\C})$ is zero (as $e=0$).

Now, using the embedding  $ch:k\otimes_{\mathbb{Z}}
Gr(\C)\hookrightarrow H$, we get a nonzero idempotent $ch(E')$ in
$R\otimes_\mathbb{Z} H$, which has zero image in
$R\otimes_\mathbb{Z} H/I$ (this image is $ch(e')$). This implies
that $ch(E')\in R\otimes_\mathbb{Z} I$. But since $ch(E')$ is an
idempotent, $ch(E')^n=ch(E')$ for all $n$, so $ch(E')\in
\bigcap_{n\ge 1}(R\otimes_\mathbb{Z} I^n)=0$, which is a
contradiction. \qed

\subsection{A proof of Theorem \ref{lie}.}
Let $\mathfrak{g}$ be a Lie algebra over a field $k$ of
characteristic $p>0$; we may assume without loss of generality that
$k$ is algebraically closed (see Corollary \ref{algcl}). Let
$A:=U(\mathfrak{g})^*$ be the dual algebra of the universal
enveloping algebra $U(\mathfrak{g})$ of $\mathfrak{g}$ (it is a
topological Hopf algebra in the topology defined by the maximal
ideal $I$ of $A$). Let $E\in Gr(\mathfrak{g})$ be an idempotent
which is not $0$ or $1$. We can assume that $Tr_E(1)=0$ modulo $p$
by replacing $E$ with $1-E$ if needed.

Now, at least one of the coefficients of $E$ is not divisible by
$p$. Indeed, otherwise $(E/p)^n=E/p^n$, so $E/p^n\in
Gr(\mathfrak{g})$ for all $n$, but $E/p^n$ does not have integer
coefficients for large enough $n$. Consequently, the image $E'$ of
$E$ in $k\otimes_\mathbb{Z} Gr(\mathfrak{g})$ is nonzero. Hence,
using Proposition \ref{injective}, we get a nonzero idempotent
$ch(E')$ in $A$. On the other hand, the augmentation map $A\to k$
maps $ch(E')$ to zero (since $Tr_E(1)=0$ modulo $p$). So $ch(E')$ is
contained in $I$. But $ch(E')$ is an idempotent, so it is contained
in any power $I^n$ of $I$. But $\bigcap_{n\ge 1} I^n=0$, so $ch(E')$
is zero, which is a contradiction. \qed

\section{The proof of Theorem \ref{maingroup}}

The proof of Theorem \ref{maingroup} will be carried in several
steps.

\subsection{$G$ is a reductive abelian affine algebraic group over
$k$.} Recall that if $G$ is a reductive abelian affine algebraic
group over $k$ then $G\cong G^0\times A$, where $G^0=\mathbb{G}_m^n$
is the $n-$dimensional torus over $k$ and $A$ is a finite abelian
group of order prime to $p$ (in case the characteristic of $k$ is
$p>0$) (see e.g., \cite{sp}). In particular, all finite-dimensional
simple representations of $G$ over $k$ are $1-$dimensional, and
their isomorphism types are parameterized by pairs $(z,\chi)$, where
$z\in \mathbb{Z}^n$ and $\chi\in \widehat{A}$. Thus all idempotents
in $\mathbb{Q}\otimes_{\mathbb{Z}} Gr(G)$ can be easily described in
this case (they involve a factor of $1/|A|$), so the result follows
in a straightforward manner.

\subsection{$G$ is any abelian affine algebraic group over $k$.}
Recall that if $G$ is an abelian affine algebraic group over $k$
then $G\cong G_s\times G_u$, where $G_s$ and $G_u$ are the subgroups
of semisimple and unipotent elements of $G$, respectively (see e.g.,
\cite{sp}). So $\Rep(G)$ and $\Rep(G_s)$ have the same Grothendieck
rings, and the claim follows from 5.1.

\subsection{$G$ is any affine algebraic group over $k$.}
Let $G$ be any affine algebraic group over $k$, and let
$\mathcal{O}(G)$ be its coordinate Hopf algebra; it is a finitely
generated commutative reduced Hopf algebra over $k$. Suppose $e\in
Gr(\Rep(G))=Gr(\Corep(\mathcal{O}(G)))$ is an idempotent. Then
$ch(e)\in \mathcal{O}(G)$ is an idempotent, so in particular a class
function on $G$ taking the values $0,1$ (here we are using the trick
with dividing by $p$, as we did in the proof of Theorem
\ref{mainhopf}, which explains why we cannot invert $p$). Therefore
it suffices to prove that $ch(e)(g)=ch(e)(1)$ for all $g\in G$. But
for that purpose we may assume that $G$ is the (Zariski closure of
the) cyclic group with generator $g$, so $G$ is abelian and the
claim follows from 5.2.

\subsection{$G$ is an affine proalgebraic group over $k$.}
Let $G$ be an affine proalgebraic group over $k$, and let
$\mathcal{O}(G)$ be its coordinate Hopf algebra over $k$; it is a
commutative reduced Hopf algebra over $k$ (not necessarily finitely
generated). But it is well known that $\mathcal{O}(G)$ is the
inductive limit of its finitely generated Hopf subalgebras (see e.g.
\cite{a}), so the claim follows in a straightforward manner from
Part 5.3.

\subsection{$G$ is an affine group scheme over $k$.}
Let $G$ be an affine group scheme, and let $\mathcal{O}(G)$ be the
commutative Hopf algebra representing the group functor $G$ (it is
not necessarily reduced) (see e.g., \cite{w}). Since $k$ is
algebraically closed, it is well known (essentially by Hilbert
Nullstellensatz) that the nilradical $I$ of $\mathcal{O}(G)$ is a
Hopf ideal. Now, the commutative reduced Hopf algebra
$\mathcal{O}(G)/I$ over $k$ represents an affine proalgebraic group
over $k$, so by 5.4, $\Corep(\mathcal{O}(G)/I)$ is virtually
indecomposable. Finally, since $\bigcap_{n\ge 1}I^n=0$, it follows
from Theorem \ref{mainhopf} that $\Corep(\mathcal{O}(G))=\Rep(G)$ is
virtually indecomposable, as claimed.

This concludes the proof of Theorem \ref{maingroup}. \qed

\section{The proof of Theorem \ref{mainformalsuper}}

\subsection{Hopf superalgebras.} Recall that a {\em Hopf superalgebra} over
a field $k$ is a Hopf algebra in the $k-$linear abelian symmetric
tensor category $\SuperVect$ of supervector spaces over $k$ (see
e.g, \cite{b}). In other words, a Hopf superalgebra
$\mathcal{H}=\mathcal{H}_0\oplus \mathcal{H}_1$ is an ordinary
$\mathbb{Z}_2-$graded associative unital algebra over $k$ (i.e., a
{\em superalgebra}), equipped with a coassociative morphism
$\Delta:\mathcal{H}\to \mathcal{H}\ot \mathcal{H}$ in $\SuperVect$,
which is multiplicative in the super-sense, and with a counit and
antipode satisfying the standard axioms. Here multiplicativity in
the super-sense means that $\Delta$ satisfies the relation
\begin{equation*}\label{sd}
\Delta(ab)=\sum (-1)^{|a_2||b_1|}a_1b_1\ot a_2b_2,
\end{equation*}
where $a,b\in \mathcal{H}$ are homogeneous elements, $|a|$ and $|b|$
denote the degrees of $a$ and $b$, $\Delta(a)=\sum a_1\ot a_2$ and
$\Delta(b)=\sum b_1\ot b_2$.

A Hopf superalgebra $\mathcal{H}$ is said to be {\em commutative} if
$ab=(-1)^{|a||b|}ba$ for all homogeneous elements $a,b\in
\mathcal{H}$.

Let $J(\mathcal{H}):=(\mathcal{H}_1)=\mathcal{H}_1^2\oplus
\mathcal{H}_1$ be the Hopf ideal of $\mathcal{H}$ generated by the
odd elements $\mathcal{H}_1$. Then the quotient
$\overline{\mathcal{H}}:=\mathcal{H}/J(\mathcal{H})$ is an ordinary
Hopf algebra. Note that if $\mathcal{H}$ is commutative then
$J(\mathcal{H})$ consists of nilpotent elements.

Let us recall the following useful construction, introduced in
Section 3.1 in \cite{aeg}. Let $\mathcal{H}$ be any Hopf
superalgebra over $k$, and let
$\widetilde{\mathcal{H}}:=k[\mathbb{Z}_2]\ltimes \mathcal{H}$ be the
semidirect product Hopf superalgebra with comultiplication
$\widetilde{\Delta}$ and antipode $\widetilde{S}$, where the
generator $g$ of $\mathbb{Z}_2$ acts on $\mathcal{H}$ by
$ghg^{-1}=(-1)^{|h|}h$. For $x\in \widetilde{\mathcal{H}}$, write
$\widetilde{\Delta}(x)=\widetilde{\Delta}_0(x)+\widetilde{\Delta}_1(x)$,
where $\widetilde{\Delta}_0(x)\in \widetilde{\mathcal{H}}\ot
\widetilde{\mathcal{H}}_0$ and $\widetilde{\Delta}_1(x)\in
\widetilde{\mathcal{H}}\ot \widetilde{\mathcal{H}}_1$. Then one can
define an \emph{ordinary} Hopf algebra structure on the algebra
$\mathcal{H}':=\widetilde{\mathcal{H}}$, with comultiplication and
antipode maps given by
$\Delta(h):=\widetilde{\Delta}_0(h)-(-1)^{|h|}( g\ot
1)\widetilde{\Delta}_1(h)$ and  $S(h):=g^{|h|}\widetilde {S}(h)$,
$h\in \mathcal{H}'$ (see Theorem 3.1.1 in \cite{aeg}).

\begin{theorem}\label{supord}
Let $\mathcal{H}$ be a Hopf superalgebra over a field $k$ of
characteristic $\ne 2$, and let $\Corep(\mathcal{H})$ be the tensor
category of finite-dimensional $\mathcal{H}-$comodules in
$\SuperVect$ over $k$. The tensor categories $\Corep(\mathcal{H})$
and $\Corep(\mathcal{H}')$ are equivalent.
\end{theorem}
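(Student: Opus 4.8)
The plan is to exhibit an explicit tensor equivalence $F\colon \Corep(\mathcal{H})\to \Corep(\mathcal{H}')$ which is the identity on underlying vector spaces and merely relabels the coaction, inserting the grouplike element $g$ exactly as in the formula defining the comultiplication of $\mathcal{H}'$. Conceptually this is Majid's bosonization (Radford's biproduct) specialized to the group $\mathbb{Z}_2$: $\SuperVect$ is the symmetric category $\Corep(k[\mathbb{Z}_2])$ equipped with the sign braiding, $\mathcal{H}$ is a Hopf algebra object inside it, and $\mathcal{H}'=\widetilde{\mathcal{H}}$ is precisely the biproduct $\mathcal{H}\rtimes k[\mathbb{Z}_2]$ presented as an ordinary Hopf algebra by the formulas of \cite{aeg}. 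The hypothesis $\chara(k)\ne 2$ guarantees that the elements $\tfrac12(1\pm g)\in k[\mathbb{Z}_2]\subset \mathcal{H}'$ are orthogonal idempotents, which is what lets us reconstruct the $\mathbb{Z}_2$-grading on the $\mathcal{H}'$-side.

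First I would construct $F$ on objects. Given $(V,\rho)\in\Corep(\mathcal{H})$ with $V=V_0\oplus V_1$ and even coaction $\rho(v)=\sum v_{(0)}\ot v_{(1)}$ (so $|v_{(0)}|+|v_{(1)}|=|v|$), define an ordinary $\mathcal{H}'$-coaction $\rho'$ on the same space $V$ by a sign-corrected $g$-insertion, of the shape $\rho'(v)=\sum (\pm)\,v_{(0)}\ot g^{|v|}v_{(1)}$, chosen so that the grading that $\mathcal{H}'$ induces on $V$ via the Hopf projection $\mathcal{H}'\twoheadrightarrow k[\mathbb{Z}_2]$ coincides with the original parity of $V$. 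The main verification here is that ordinary coassociativity $(\rho'\ot\id)\rho'=(\id\ot\Delta)\rho'$ for the twisted comultiplication $\Delta$ of $\mathcal{H}'$ follows from the super-coassociativity of $\rho$ together with the relation $gh=(-1)^{|h|}hg$ in $\mathcal{H}'$; the $g$-factors and signs introduced on the two sides agree by the very definition $\Delta(h)=\widetilde{\Delta}_0(h)-(-1)^{|h|}(g\ot 1)\widetilde{\Delta}_1(h)$. On morphisms $F$ is the identity (an even $\mathcal{H}$-colinear map is $\mathcal{H}'$-colinear), so $F$ is an exact $k$-linear functor. Its inverse $G$ uses the idempotents $\tfrac12(1\pm g)$ to split any $\mathcal{H}'$-comodule into its $g$-homogeneous parts, declares this to be the parity, and strips the $g$'s to recover a super-coaction valued in $\mathcal{H}$; one checks $GF=\id$ and $FG=\id$, so $F$ is an equivalence of abelian categories.

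Next I would upgrade $F$ to a tensor functor by supplying a natural isomorphism $J_{V,W}\colon F(V)\ot F(W)\xrightarrow{\sim} F(V\ot W)$, taken to be the identity (or a fixed diagonal sign) on underlying spaces. The content is that the ordinary $\mathcal{H}'$-coaction on $F(V)\ot F(W)$, which multiplies the two $g$-decorated outputs inside $\mathcal{H}'$, reproduces exactly the coaction of $F(V\ot W)$: when the factors $g^{|v|}v_{(1)}$ and $g^{|w|}w_{(1)}$ are multiplied in $\mathcal{H}'$, the commutation rule $gh=(-1)^{|h|}hg$ generates precisely the Koszul sign $(-1)^{|v_{(1)}||w_{(0)}|}$ carried by the symmetric braiding of $\SuperVect$ in the super-tensor coaction of $V\ot W$. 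I would then verify that $J$ satisfies the associativity (hexagon) coherence, which again reduces to the identity $g^2=1$ and the matching of these two sign sources, and that $F(\mathbf 1)=\mathbf 1$; this makes $F$ a tensor equivalence.

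The main obstacle is the sign bookkeeping in the previous two steps: pinning down the exact placement of the $g$'s and the overall Koszul signs in the definition of $\rho'$ and of $J$ so that (a) $\rho'$ is coassociative for the \emph{ordinary} comultiplication of $\mathcal{H}'$ and (b) $J$ is $\mathcal{H}'$-colinear and coherent. Both verifications hinge on the single mechanism that the sign braiding of $\SuperVect$ and the twist $(-1)^{|h|}(g\ot 1)$ in the definition of $\mathcal{H}'$ cancel coherently, and on $\chara(k)\ne 2$ to make the grading projections $\tfrac12(1\pm g)$ available for the inverse functor.
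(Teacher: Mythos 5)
Your proposal is correct in outline, and the mechanism you identify --- the relation $gh=(-1)^{|h|}hg$ in $\mathcal{H}'$ reproducing the Koszul sign of the braiding of $\SuperVect$ --- is exactly what makes the identification a \emph{tensor} equivalence; this is indeed the bosonization picture. The paper, however, takes a different and much shorter route: it dualizes. An $\mathcal{H}$-comodule in $\SuperVect$ is a $\mathbb{Z}_2$-graded continuous $\mathcal{H}^*$-module; since $\chara\, k\ne 2$ (so that $g$ can be diagonalized via the idempotents $\tfrac12(1\pm g)$, playing the same role as in your inverse functor), such a graded module is the same as a continuous module over the smash product $k[\mathbb{Z}_2]\ltimes\mathcal{H}^*$; and this algebra is isomorphic to $(\mathcal{H}')^*$, because the comultiplication twist $\Delta(h)=\widetilde{\Delta}_0(h)-(-1)^{|h|}(g\ot 1)\widetilde{\Delta}_1(h)$ dualizes precisely to the semidirect-product multiplication. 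Hence $\mathcal{H}$-comodules in $\SuperVect$ are identified with $\mathcal{H}'$-comodules in $\Vect$, and the paper then merely asserts that this identification is a tensor equivalence. What the paper's route buys is that all of your sign bookkeeping is absorbed into a single algebra isomorphism on the dual side, at the cost of invoking duality for a possibly infinite-dimensional Hopf superalgebra (hence the word ``continuous'', i.e., rational modules) and leaving the tensor compatibility as an unverified ``easy to check''. What your route buys is a self-contained, comodule-level construction in which the tensor structure is the visible content rather than an afterthought; to complete it you only need to fix conventions, e.g.\ taking $\rho'(v)=\sum v_{(0)}\ot g^{|v_{(0)}|}v_{(1)}$ (power of $g$ governed by the degree of the vector output, rather than the $g^{|v|}$ you wrote) makes $J_{V,W}=\mathrm{id}$ work on the nose, since commuting $g^{|w_{(0)}|}$ past $v_{(1)}$ produces exactly the sign $(-1)^{|v_{(1)}||w_{(0)}|}$ of the super-tensor coaction, and then coassociativity and the coherence of $J$ follow from $g^2=1$ by the computation you sketch.
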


\begin{proof}  An
$\mathcal{H}-$comodule in $\SuperVect$ is a (continuous)
$\mathbb{Z}_2-$graded $\mathcal{H}^*-$module in the category $\Vect$
of vector spaces over $k$, which is the same as a continuous module
over the algebra $k[\mathbb{Z}_2]\ltimes \mathcal{H}^*$. But
$k[\mathbb{Z}_2]\ltimes \mathcal{H}^*$ is isomorphic to
$(\mathcal{H}')^*$ as an algebra, so a $\mathcal{H}-$comodule in
$\SuperVect$ is the same thing as a $\mathcal{H}'-$comodule in
$\Vect$. Finally, it is easy to check that this equivalence is in
fact a tensor equivalence.
\end{proof}

We note that as a consequence of Theorem \ref{supord}, we can define
the character map
$ch:k\otimes_{\mathbb{Z}}Gr(\Corep(\mathcal{H}))\to
\mathcal{H}',\,\,a\otimes [M]\mapsto a\cdot ch(M)$, and deduce from
Proposition \ref{injective} that it is an injective $k-$algebra
homomorphism.

\subsection{Affine supergroup schemes and formal supergroups.}
Recall that an {\em affine supergroup scheme} $\mathcal{G}$ is the
spectrum of a (not necessarily finitely generated) commutative Hopf
superalgebra $\mathcal{O}(\mathcal{G})$ over $k$ (see e.g.,
\cite{d}). In other words, it is a functor $\mathcal{G}$ from the
category of supercommutative algebras to the category of groups
defined by $A\mapsto
\mathcal{G}(A):=\Hom(\mathcal{O}(\mathcal{G}),A)$, where
$\Hom(\mathcal{O}(\mathcal{G}),A)$ is the group of algebra maps
$\mathcal{O}(\mathcal{G})\to A$ in $\SuperVect$.

Recall that a \emph{formal supergroup} $\mathcal{G}$ over a field
$k$, with reduced part $G$, is the following algebraic structure. We
have a superalgebra $\mathcal{O}(\mathcal{G})$ over $k$, which has
an ideal $\mathcal{I}$ such that $\mathcal{O}(\mathcal{G})$ is
complete and separated in the topology defined by $\mathcal{I}$
(i.e., $\mathcal{O}(\mathcal{G})=\underleftarrow{\lim}
\mathcal{O}(\mathcal{G})/\mathcal{I}^m$), and
$\mathcal{O}(\mathcal{G})/\mathcal{I}=\mathcal{O}(G)$. Finally, we
have a supercocommutative coproduct $\Delta:
\mathcal{O}(\mathcal{G})\to \mathcal{O}(\mathcal{G})\widehat{\otimes
}\mathcal{O}(\mathcal{G})$, where the latter completed tensor
product is
$\underleftarrow{\lim}(\mathcal{O}(\mathcal{G})/\mathcal{I}^m\otimes
\mathcal{O}(\mathcal{G})/\mathcal{I}^m)$, defining a topological
Hopf algebra structure on $\mathcal{O}(\mathcal{G})$, such that
$\mathcal{I}$ is a Hopf ideal, and the isomorphism
$\mathcal{O}(\mathcal{G})/\mathcal{I}\to \mathcal{O}(G)$ is a Hopf
superalgebra isomorphism.

Let $\mathcal{G}$ be an affine supergroup scheme. The ordinary
commutative Hopf algebra $\overline{\mathcal{O}(\mathcal{G})}$ is
isomorphic to $\mathcal{O}(G)$ for some affine group scheme $G$,
which is referred to as the {\em even part} of $\mathcal{G}$.

Let $\mathcal{G}$ be an affine supergroup scheme over $k$, or, more
generally, a formal supergroup over $k$, and let $\Rep(\mathcal{G})$
denote the category of finite-dimensional algebraic representations
of $\mathcal{G}$ in $\SuperVect$ over $k$. Then $\Rep(\mathcal{G})$
is a $k-$linear abelian symmetric rigid tensor category with
$\End({\bf 1})=k$, where ${\bf 1}$ denotes the unit object, which
admits a fiber functor (= a symmetric tensor functor) to the full
$k-$linear abelian symmetric rigid tensor subcategory of
$\SuperVect$ whose objects are the finite-dimensional supervector
spaces. Just like in the even case, $\Rep(\mathcal{G})$ is
equivalent to $\Corep(\mathcal{O}(\mathcal{G}))$.

\subsection{The proof of Theorem \ref{mainformalsuper}.} Let
$\mathcal{G}$ be an affine supergroup scheme over $k$, let
$\mathcal{H}:=\mathcal{O}(\mathcal{G})$ be the commutative Hopf
superalgebra representing the group functor $\mathcal{G}$, and let
$\mathcal{H}'$ be the ordinary Hopf algebra associated with
$\mathcal{H}$. Then the quotient
$\mathcal{H}'/(k[\mathbb{Z}_2]\ltimes J(\mathcal{H}))$ is a
commutative Hopf algebra representing the group functor
$\mathbb{Z}_2\times G$, where $G$ is the even part of $\mathcal{G}$.
Therefore $\Corep(\mathcal{H}'/(k[\mathbb{Z}_2]\ltimes
J(\mathcal{H})))$ is virtually indecomposable by Theorem
\ref{maingroup}, and it follows from Theorems \ref{mainhopf} and
\ref{supord} that $\Corep(\mathcal{H})$ is virtually indecomposable,
as claimed.

For formal supergroups $\mathcal{G}$ over $k$ the proof is
completely parallel using Theorem \ref{mainformal} about formal
groups over $k$. \qed

\end{document}